%% file: main.tex
\documentclass[psamsfonts]{amsart}

\input{arxiv_preamble}

\title{Shirshov's amalgamated free product and generic nilpotent groups}

\thanks{$^\dagger$ supported by the Basque Government grant reference number IT1913-26, and by the Ramon y Cajal grant RYC2023-042677-I funded by MICIU/AEI/10.13039/501100011033 and by ESF+. $^\ddagger$ supported by NSF grant DMS-2246992 and NSF CAREER award DMS-2442011.}

\author[C. D'Elb\'{e}e]{Christian D'Elb\'{e}e$^{\dagger}$}

\address{University of the Basque Country, Department of Mathematics (Leioa) / Institute for Logic, Cognition, Language and Information (Donostia-San Sebastián), Spain;}
\email{christian.delbee@ehu.eus} 
\urladdr{\href{http://choum.net/\textasciitilde chris/page\textunderscore perso/}{http://choum.net/\textasciitilde chris/page\textunderscore perso/}}

\author[I. M\"uller]{Isabel M\"uller}
\address{Department of Mathematics and Actuarial Science \\
The American University in Cairo \\ Egypt }
\email{isabel.muller@aucegypt.edu}
\urladdr{\href{https://www.aucegypt.edu/fac/isabel}{https://www.aucegypt.edu/fac/isabel}}

\author[N. Ramsey]{Nicholas Ramsey$^\ddagger$}
\address{Department of Mathematics \\
University of Notre Dame\\
 USA}
\email{sramsey5@nd.edu}
\urladdr{\href{https://math.nd.edu/people/faculty/nicholas-ramsey/}{https://math.nd.edu/people/faculty/nicholas-ramsey/}}

\author[D. Siniora]{Daoud Siniora}
\address{Department of Mathematics and Actuarial Science \\The American University in Cairo\\
 Egypt}
\email{daoud.siniora@aucegypt.edu}
\urladdr{\href{https://sites.google.com/view/daoudsiniora/}{https://sites.google.com/view/daoudsiniora/}}

\date{\today}

\begin{document}

\maketitle

\begin{abstract}
In earlier work, the authors gave a construction and description of an amalgamated free product of filtered Lie algebras within a fixed nilpotency class, based on an intricate induction rooted in the work of Maier and of Higman. In this paper, the authors give a new description of this amalgam, adopting the viewpoint and methods from the work of A. I. Shirshov on amalgamation of Lie algebras, substantially simplifying their previous approach. This new description yields both group-theoretic and descriptive set-theoretic applications. A $c$-nilpotent group is called UL-equivalent if its lower and upper central series coincide. We prove that every finite $c$-nilpotent group of prime exponent $p$ with $p>c$ embeds in a finite UL-equivalent $c$-nilpotent group of exponent $p$. This recovers a result due to Ivanov and Majcher that the Polish space of enumerated $c$-nilpotent groups of exponent $p>c$ has a comeager orbit. Our result also has the following consequences. First, the class of finite $c$-nilpotent groups of exponent $p>c$ has the \textit{cofinal} amalgamation property (answering a question of Ivanov and Majcher, who showed that it has the \textit{weak} amalgamation property). Second, the reduct of the Fraïssé limit of $c$-Lazard groups of exponent $p>c$ to the group language is generic in the space of enumerated groups. Finally, we prove analogues of these results also for torsion-free $c$-nilpotent groups.
\end{abstract}

\setcounter{tocdepth}{1}
\tableofcontents

\section{Introduction}

This article is the third in a series by the authors, investigating the properties of generic nilpotent groups. Although from a model-theoretic point of view, abelian groups are quite tame, the moment one takes a tiny step into the non-abelian world, one finds arbitrarily complicated groups. For example, constructions of Mekler and related ones due to Ershov yield that even $2$-nilpotent groups of exponent $p$ (for an odd prime $p$) can code arbitrary graphs. This leads naturally to the question: is the complexity that one can find in the class of nilpotent groups typical or is it special? If one picks a nilpotent group at random, should one expect that it be tame or wild? Of course, for this to be a proper mathematical question, one has to specify what is meant by a `typical', `random', or `generic' nilpotent group, and how one will measure tameness and complexity. In \cite{DINCI} and \cite{DINCII}, we interpreted these questions in the way model theorists tend to interpret them, by investigating existentially closed nilpotent groups via the model-theoretic properties of their model companions and determining their place in neostability's map of the universe. Another, classical approach for typicality in groups is due to Gromov and its models of randomness \cite{gromov1993}, which seems also suitable for nilpotent groups, see \cite{cordes2018random}. A third and important approach to genericity stems from descriptive set theory, in the study of the space of \textit{enumerated groups} \cite{goldbring2023generic, ivanov1999genericity,truss1992}. This is where the main applications of our present results will occur. Before getting to those applications, let us describe the starting point of the present paper and its main result.

The core of the authors' work in \cite{DINCI} was the construction of an amalgamated free product of filtered Lie algebras within a fixed nilpotency class, which we called \emph{Lazard Lie algebras} (see Definition \ref{def:LLA}). The construction of this amalgam constituted a substantial part of the paper, built on a intricate induction rooted in the work of Maier \cite{Maierexpp}, and Higman \cite{higman64} before that. It leveraged a notion of rank and a set of tools (what we called the Malcev basis and Malcev ``calculus") that allowed us to inductively construct the amalgam and analyze its essential features. In the present paper, we revisit this construction and give it a new description following the classical approach of Shirshov on free Lie algebras. While working on \cite{DINCI}, it was clear to the authors that, in some particularly easy cases, a basis of the amalgam of Lazard Lie algebras could be described in terms of chosen bases for the Lazard Lie algebras being amalgamated, roughly in terms of Hall monomials. This was canonical in the sense that given the data of the amalgamation problem and suitable bases for the factors, one could mechanically produce an explicit list of monomials which would constitute a basis of the amalgam. However, in full generality, our tools seemed inadequate to give this explicit basis in simple terms. This was due to our description of the amalgam, which was essentially characterized by its inductive definition. This is where Shirshov's approach via combinatorics of words (regular/special words, so-called Lyndon-Shirshov words) turned out to be extremely useful and, in conjunction with some of the tools developed in \cite{DINCI}, provided us with the right viewpoint to obtain a new description of this amalgam, circumventing the induction and thus simplifying drastically the construction. It turned out that this result has many applications beyond the theory of Lie algebras.

The first application is a purely group-theoretic result. If $G$ is a nilpotent group, then it is known that the upper central series $(Z_i(G))$ and lower central series $(\gamma_i(G)))$ have the same length, though the subgroups that appear in these two series are not the same in general. A group $G$ of nilpotency class $c$ is called \textit{UL-equivalent} if for all $1\leq i\leq c+1$ we have $\gamma_i(G) = Z_{c+1-i}(G)$\cite{grouppropsULequivalentGroup}. Examples of such groups include finite $p$-groups of maximal class. Using our new description of the amalgam and the Lazard correspondence, we obtain a positive answer to a question asked by Ivanov and Majcher \cite[Bottom of page 12]{ivanov2025genericgroupsweakamalgamation}:

\begin{question}
    Is it possible to embed any finite $c$-nilpotent group of exponent $p$ with $p>c$ in a finite UL-equivalent $c$-nilpotent group of exponent $p$?
\end{question}

We prove that every group of nilpotency class $c$ and exponent $p$ with $p>c$ embeds in a UL-equivalent group of nilpotency class $c$ and exponent $p$. Further, if the starting group is finite, the UL-equivalent group can also be chosen to be finite, answering the question.

This rather innocuous group-theoretic result has an unexpected consequence in the study of enumerated groups, as we describe now. If $\mathbb{K}$ is a countable class of finitely generated structures, the collection of non-finitely generated structures with underlying set is $\omega$ and whose age is contained in $\mathbb{K}$ can be organized into a topological space, in which the set of models containing a given finitely generated structure, with underlying set contained in $\omega$, as a substructure is declared to be a basic open set. In this setting, the natural notion of a generic property is one that holds on a comeager set of models. Specializing for $T$ being the theory of groups, we obtain the space $\mathcal{G}$ of \textit{enumerated groups}, and for any group-theoretic property $P$, the subspace $\mathcal{G}_P$ of groups satisfying $P$. In \cite{goldbring2023generic} the question was raised whether a comeager subset of $\mathcal{G}_P$ could consist of isomorphic groups  (Question 1.0.1 (2)). Their work proves that this is generally hard to predict, and that, for instance, $\mathcal{G}$ does not admit such a comeager subset. On the other hand, for $P$ being the property of being nilpotent of class at most $c\in \N$ and of prime exponent $p>c$, Ivanov and Majcher \cite{ivanov2025genericgroupsweakamalgamation} proved that $\mathcal{G}_{c,p} := \mathcal{G}_P$ admits a comeager isomorphism class, answering positively the question above. 
Using the machinery developed by Ivanov in \cite{ivanov1999genericity}, they obtain their result by proving that the class $\mathbb{G}_{c,p}$ of finite $c$-nilpotent groups of exponent $p>c$ satisfies a weakening of the amalgamation property (AP) called the \textit{weak amalgamation property} (WAP), see Definition \ref{def:APs}. This notion, introduced by Ivanov \footnote{It was introduced under the name \textit{almost amalgamation property}.} in \cite{ivanov1999genericity}, also occurs in the work of Kechris and Rosendal \cite{kechrisRosendal2007} on generic automorphisms of homogeneous structures. Sitting strictly between (AP) and (WAP) is the notion of \textit{cofinal amalgamation property} (CAP, Definition \ref{def:APs}) which was fist introduced by Calais \cite{calais1967} then rediscovered by Truss in  his fundamental paper \cite{truss1992}. Summarizing, we have:

\[\text{(AP)}\implies \text{(CAP)}\implies \text{(WAP)}\]It is easy to see that the class $\mathbb{G}_{c,p}$ fails (AP), and our result yields that $\mathbb{G}_{c,p}$ satisfies the (CAP), using that UL-equivalent groups are in fact amalgamation bases.

Finally, another consequence of our result, pointed out by Ivanov and Majcher in \cite[Remark 2.12]{ivanov2025genericgroupsweakamalgamation}, is an explicit example of a \textit{generic group}, in the sense of an element of $\mathcal{G}_{c,p}$ whose isomorphism class is comeagre in $\mathcal{G}_{c,p}$.
The failure of (AP) for the class $\mathbb{G}_{c,p}$ can be in a sense compensated by expanding the language. Let $\mathbb{G}_{c,p}^P$ be the class of finite \textit{Lazard} $c$-nilpotent groups of exponent $p>c$, i.e. groups $G\in \mathbb{G}_{c,p}$ equipped with a filtration of subgroups $P_1(G)\supseteq\ldots \supseteq P_{c+1}(G) = 1$ satisfying the property $[P_i,P_j]\seq P_{i+j}$. The class $\mathbb{G}_{c,p}^P$ was introduced by Baudisch \cite{Baudisch2}, building on the work of Maier \cite{Maierexpp} and revisited recently by the authors in \cite{DINCI}. Unlike $\mathbb{G}_{c,p}$, the class $\mathbb{G}_{c,p}^P$ satisfies (AP) and is a Fraïssé class. Its Fraïssé limit $\mathbf{G}_{c,p}^P$ is $\omega$-categorical. Using a result of Kabluchko and Tent \cite{kabluchko2021universalhomogeneousstructuresgeneric}, the isomorphism class of $\mathbf{G}_{c,p}^P$ is generic in the corresponding space. The question of whether the reduct $\mathbf{G}_{c,p}$ of $\mathbf{G}_{c,p}^P$ to the pure group language is generic in $\mathcal G _{c,p}$ is nontrivial and follows from our group-theoretic result, using \cite[Remark 2.12]{ivanov2025genericgroupsweakamalgamation}. Note that $\mathbb{G}_{c,p}$ is a weak Fraïssé class \cite{kruckmanthesis2016,kabluchko2022weakfraisselimits}, hence admits a weak Fraïssé limit, which is generic in the corresponding class by \cite{kabluchko2022weakfraisselimits}. 

Finally, our approach allows us to prove analogous theorems for the case of torsion-free $c$-nilpotent groups. Since the results of \cite{DINCI} concerning Lie algebras did not have any hypothesis about the underlying field, we know the class of $c$-nilpotent Lazard Lie algebras over $\mathbb{Q}$ also forms a Fra\"iss\'e class with free amalgamation. As a consequence, our methods allow us also to show that the class of torsion-free $c$-nilpotent groups has a generic group, and we show that this group is the reduct to the language of groups of the group corresponding, via the Malcev correspondence, to the Fra\"iss\'e limit of finitely generated $c$-nilpotent Lazard Lie algebras over $\mathbb{Q}$.




\section{Preliminaries}

We will recall the different notions of amalgamation.

\begin{definition}\label{def:APs}
    
We say that a class $\CC$ of structures has
\begin{enumerate}
    \item the \textit{amalgamation property} if for all $A,B,C\in \mathscr{C}$ with embeddings $f_0:C\to A$, $g_0:C\to B$ there exists $D\in \mathscr{C}$ and embeddings $f_1 : A\to D$, $g_1:B\to D$ such that $f_1\circ f_0\upharpoonright C = g_1\circ g_0\upharpoonright C$;
    \item the \textit{cofinal amalgamation property} if there is a subclass $\CC'\subseteq \CC$ which has the amalgamation property and such that any $A\in \CC$ embeds into some $A'\in \CC'$;
     \item the \textit{weak amalgamation property} (WAP) if for all $C\in \mathscr{C}$ there is some $C’\in \mathscr{C}$ together with an embedding $h:C\rightarrow C’$, such that for all $A,B \in \mathscr{C}$ with embeddings $f_0:C’\to A$, $g_0:C’\to B$ there exists $D\in \mathscr{C}$ and embeddings $f_1 : A\to D$, $g_1:B\to D$ such that $f_1\circ f_0\circ h\upharpoonright C = g_1\circ g_0\circ h \upharpoonright C$.
\end{enumerate}
\end{definition}
\begin{definition}\label{def:LLA}
    Given a natural number $c\in \N$, a $c$-\textit{nilpotent} \textit{Lazard Lie algebra} ($c$-LLA, or LLA when the $c$ is implicit) $L$ over a field $\F$ is a Lie algebra $L$ over $\F$ equipped with a chain of distinguished subalgebras $L = L_{1} \geq L_{2} \geq \cdots \geq L_{c+1} = 0$ satisfying the following property
    \begin{itemize}\itemsep0.7em
        \item $[L_{i},L_{j}] \leq L_{i+j}$ for all $i,j$, where $L_k = 0$ for all $k > c$.
    \end{itemize}
\end{definition}

In \cite{DINCI}, we showed that the class of $c$-LLAs has the free amalgmation property in the sense below. 

\begin{definition}[Baudisch]\label{def:freeamalgambaudisch}
     Let $A,B,C$ be LLAs with embeddings $f_0 :C\to A$ and  $g_0:C\to B$. We say that a Lazard Lie algebra $S$ is a \textit{free amalgam} of $A$ and $B$ over $C$ if 
    
    \begin{enumerate}\itemsep0.7em
    \item $S$ is an amalgam of $A$ and $B$ over $C$, i.e., there are embeddings $f_1:A\to S$, $g_1:B\to S$ such that $f_1\circ f_0 = g_1\circ g_0$; 
    \item $S =\langle A'B'\rangle$ where $A' = f_1(A)$ and $B' = g_1(B)$;
    \item \textit{(Strong amalgam)} $A'\cap B' = C'$ where $C' = (f_1\circ f_0)(C)$;
    \item \textit{(Freeness)} for any LLA $D$ and any LLA \textit{homomorphisms} $f:A\to D$ and $g:B\to D$ such that $f\circ f_0(z)=g\circ g_0(z)$ for $z\in C$, there exists a (unique) homomorphism $h : S\to D$ such that $h\circ f_1=f$ and $h\circ g_1 =g$.
    \end{enumerate}
    \[ \xymatrix{ & A \ar[rd]_{f_1}\ar@/^/[rrrd]^{f} \\    
             C\ar[ur]^{f_0} \ar[dr]_{g_0} & & \mathbf{S}\ar@{-->}[rr]^{h} & & D\\   
              & B \ar[ru]^{g_1}\ar@/_/[rrru]_{g}}
\]
\end{definition}

  If such a free amalgam of $A$ and $B$ over $C$ exists, it is unique and we denote it by $A\otimes_C B$.

\section{Shirshov's construction}

Our aim in these notes is to give an alternative description of the free amalgam of $c$-nilpotent Lazard Lie algebras presented in \cite{DINCI}.  Here is the rough idea: Shirshov showed how to construct an amalgamated free product of Lie algebras and, significantly, explained how to obtain a basis for this Lie algebra \cite{shirshov1962hypothesis}. So given $c$-nilpotent Lazard Lie algebras $A$ and $B$ containing a common subalgebra $C$, we forget the LLA structure and form the amalgamated free product $L_{0} = A *_{C} B$ in the category of Lie algebras. The LLA structure on $A$ and $B$ induces an LLA structure on $L_{0}$ and we quotient by the piece of this gradation indexed by $c+1$. This produces a $c$-nilpotent LLA into which $A$ and $B$ embed. Using the properties of Shirshov's basis, we can conclude that this quotient is isomorphic to the free amalgam $A \otimes_{C} B$. 

\subsection{Shirshov's amalgamated free product}

\begin{defn}\cite{shirshov2009subalgebras, shirshov1958free, shirshov1962hypothesis}
    Suppose $X$ is a set totally ordered by $<$. We will regard $X$ as an alphabet. We denote by $X^*$ the set of associative words in the letters $X$, which we endow with the lexicographic order (also denoted $<$). 
    \begin{enumerate}
        \item A word $w \in X^{*}$ is called \emph{regular} if whenever $w$ is factored $w = uv$ where $u$ and $v$ are non-empty words, then $w > vu$. 
        \item We define the \emph{canonical bracketing} of a regular associative word $w$ by induction on the length of $w$. If $w \in X$, then $[w] = w$. If the length of $w$ is greater than $1$, then we can write $w = uv$ where $v$ is the longest proper regular suffix of $w$. By \cite[Lemma 2.15]{BokutChen2007}, then $u$ is also regular and we set $[w] = [[u],[v]]$.
    \end{enumerate}
    Two words (associative or non-associative) are said to have the \emph{same content} if each element of $X$ occurs in each word the same number of times. The \emph{content} of a word, then, can be understood as the multiset of the symbols occuring in the word, taking into account their multiplicity. 
\end{defn}

By abuse of language, we will refer to a canonical bracketing of a regular word as a regular word, when context makes it clear that we are not referring to an associative word. The notion of a regular word was introduced by Shirshov in \cite{shirshov1958free}, where he proved that the canonical bracketings of regular words form a basis of the free Lie algebra generated by $X$. For a more contemporary presentation of this result, see also \cite[Section 5.3]{Khu98} (where canonical bracketings are called \emph{basic products}). 

Now we describe Shirshov's method for constructing an amalgamated free pro\-duct of Lie algebras. While in our set up we only need to amalgamate two Lie algebras, we will present Shirshov's general method for an entire family of Lie algebras. We thus start with an indexed family of Lie algebras $(L_{\alpha})_{\alpha \in I}$, all of which contain a common subalgebra $L_{*}$. We choose a basis $(e_{\beta})_{\beta \in J}$ for $L_{*}$.  For each $\alpha \in I$ and $\beta \in J$, we let $e_{\alpha,\beta} = e_{\beta}$ and we extend $(e_{\alpha,\beta})_{\beta \in J}$ to a basis $(e_{\alpha,\beta})_{\beta \in J_{\alpha}}$ for $L_{\alpha}$ (so $J \subseteq J_{\alpha})$. We may assume that the index sets $J_{\alpha}$ are disjoint over $J$. We put an ordering $<_{I}$ on $I$ and an ordering $<_{J_{\alpha}}$ extending a common ordering $<_{J}$ on $J$ for each $\alpha \in I$ with the property that $\beta \in J$ and $\gamma \in J_{\alpha} \setminus J$ implies $\beta <_{J_{\alpha}} \gamma$. 

Now we introduce a new collection of formal symbols $S = \{f_{\alpha,\beta} : \alpha \in I,\beta \in J_{\alpha}\}$ with $f_{\alpha,\beta} = f_{\alpha',\beta}$ for all $\alpha,\alpha' \in I$ and $\beta \in J$. We order these symbols by stipulating that
$$
f_{\alpha,\beta} < f_{\alpha',\beta'} \iff \left\{
\begin{matrix}
    \beta,\beta' \in J \text{ and } \beta <_{J} \beta' \\
    \beta \in J \text{ and }\beta' \not\in J \\
    \alpha = \alpha' \text{ and } \beta <_{J_{\alpha}} \beta' \\
    \beta,\beta' \not\in J \text{ and } \alpha < \alpha'
\end{matrix}
\right.
$$
In other words, the $f_{\alpha,\beta} = f_{\alpha',\beta}$ with $\beta \in J$ are at the bottom of the order and are ordered to match the order on $J$, then the elements $(f_{\alpha,\beta})_{\alpha \in I, \beta \in J_{\alpha} \setminus J}$ are ordered lexicographically. 

Let $\overline{L}$ the free Lie algebra generated by $S$ (see \cite{bourbakiLiechapter23} or \cite{Khu98}).  We form an ideal $Q$ generated by elements of the form $q_{\alpha,\beta,\gamma}$ for $\alpha \in I$ and $\beta <_{J_{\alpha}} \gamma$ in $J_{\alpha}$ defined to be 
$$
q_{\alpha,\beta,\gamma} := [f_{\alpha,\beta},f_{\alpha,\gamma}] - \sum_{\delta \in J_{\alpha}} \lambda_{\delta}f_{\alpha,\delta}
$$
(for scalars $\lambda_{\delta} \in \mathbb{F}$), where the equation 
$$
[e_{\alpha,\beta},e_{\alpha,\gamma}] = \sum_{\delta \in J_{\alpha}} \lambda_{\delta}e_{\alpha,\delta}
$$
is true in the Lie algebra $L_{\alpha}$. 

The amalgamated free product of the Lie algebras $(L_{\alpha})_{\alpha \in I}$ over $L_{*}$ will be defined to be $\overline{L}/Q$. When we are forming the free amalgam over $L_{*}$ of two Lie algebras $A$ and $B$, it will be denoted $A *_{L_{*}} B$.  

It is reasonably clear that, if there is any amalgamated free product of $(L_{\alpha})_{\alpha \in I}$ over $L_{*}$ at all, it should be $\overline{L}/Q$, but the key feature of Shirshov's analysis is that he provides an explicit basis for $\overline{L}/Q$ which renders the analysis of this Lie algebra tractable. 

The free Lie algebra $\overline{L}$ has a basis consisting of the canonical bracketings of regular words in $S$ \cite[Theorem 1]{shirshov1962hypothesis}. The following definition isolates the regular words which form a basis for the quotient $\overline{L}/Q$:

\begin{defn}
    A basis word $v$ of $\overline{L}$ (i.e. a canonical bracketing of a regular associative word in $S$) is called \emph{special} if the corresponding regular associative word contains no subword of the form $f_{\alpha,\beta}f_{\alpha,\beta'}$ with $\beta >_{J_{\alpha}} \beta'$. 
\end{defn}

\begin{fact} \label{fact: shirshov basis} \cite[Theorem 1]{shirshov1962hypothesis}
    The Lie algebra $\overline{L}/Q$ has a basis formed by the (images of the) special regular words.
\end{fact}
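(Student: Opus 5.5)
This is Shirshov's theorem, so I would prove it with Shirshov's composition (Gröbner–Shirshov) method for Lie algebras. I split the claim into spanning and linear independence of the images of special regular words in $\overline{L}/Q$.

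\emph{Spanning.} Every element of $\overline{L}$ is a linear combination of canonical bracketings of regular words. Suppose a regular word $w$ contains a subword $f_{\alpha,\gamma}f_{\alpha,\beta}$ with $\gamma>_{J_\alpha}\beta$. The word $f_{\alpha,\gamma}f_{\alpha,\beta}$ is regular, with bracketing $[f_{\alpha,\gamma},f_{\alpha,\beta}]$, and it is exactly the leading word of $-q_{\alpha,\beta,\gamma}$. The other terms of $q_{\alpha,\beta,\gamma}$ are single letters, hence of smaller degree.

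By Shirshov's lemma on special bracketings, if a regular word $w$ contains a regular subword $u$, then there is a bracketing $[w]_u$ of $w$ in which $[u]$ appears as a sub-bracket, and whose leading associative word is $w$. Substituting the relation in $[w]_u$ rewrites $[w]$ modulo $Q$ as a combination of basis words that are smaller in the deg-lex order. Deg-lex is a well-order on words over a fixed finite alphabet, and only finitely many letters are involved in any given element, so induction shows that special regular words span $\overline{L}/Q$.

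\emph{Linear independence.} By the Composition--Diamond lemma for Lie algebras (Shirshov), it suffices to show that the set $R=\{q_{\alpha,\beta,\gamma}\}$ is closed under compositions. Once that holds, the regular words avoiding every leading word $f_{\alpha,\gamma}f_{\alpha,\beta}$ ($\gamma>\beta$) are linearly independent modulo the ideal generated by $R$, and these are precisely the special words.

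The leading words all have length $2$. Inclusion compositions therefore occur only between identical leading words, and those are trivial. The only nontrivial compositions are intersections, along a word $f_{\alpha,\delta}f_{\alpha,\gamma}f_{\alpha,\beta}$ with $\delta>\gamma>\beta$. Overlaps between different indices $\alpha\neq\alpha'$ can only share letters $f_{\cdot,\beta}$ with $\beta\in J$. Since $f_{\alpha,\beta}=f_{\alpha',\beta}$ for such $\beta$, an overlap word $f_{\alpha,\delta}f_{\alpha,\gamma}f_{\alpha',\beta}$ with middle letter in $J$ forces $\delta,\gamma,\beta$ to satisfy the ordering conditions. Here the choice of order matters: letters from $J$ lie at the bottom. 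Consequently the middle common letter must lie in $J$ and the last letter is then also in $J$, so every relevant relation actually lives inside a single $L_\alpha$, or inside $L_*$ when $\alpha\neq\alpha'$.

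The composition of intersection then reduces, modulo $R$ and smaller terms, to the Jacobi identity $[[e_\delta,e_\gamma],e_\beta]+\dots=0$ evaluated in $L_\alpha$, after rewriting $[e_\delta,e_\gamma]$ via the structure constants. Because $L_\alpha$ is a Lie algebra, this composition is trivial modulo $R$ at the relevant word. For the mixed case the same holds because the structure constants of $L_*$ agree in every $L_\alpha$: $L_*$ is a common subalgebra, so its products expand into $J$-letters alone.

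The main obstacle will be this composition check, in particular verifying carefully that the overlaps involving two different factors reduce to computations in $L_*$. This is the step where the specific order on $S$, with $J$-letters smallest, is used, together with the fact that $[e_\beta,e_{\beta'}]$ for $\beta,\beta'\in J$ lies in the span of $J$. If the Composition--Diamond lemma is deemed too heavy to quote, the fallback is to define a bracket directly on the span of special words, by reduction, and verify the Lie axioms. That is essentially Shirshov's original argument, and there the Jacobi verification is the same obstacle.
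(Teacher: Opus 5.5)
Your argument is correct, but it is not the paper's route. The paper gives no proof of this Fact: it quotes Shirshov's Theorem~1. The only piece of Shirshov's argument it reproduces, inside the proof of the lemma $[P_n,P_m]\le P_{n+m}$, is the reduction half. There it runs a direct three-case analysis of how a regular word can fail to be special: a subword $[f_{\alpha,\beta},f_{\alpha,\gamma}]$, a subword $[f_{\alpha,\beta},[f_{\alpha,\gamma},w]]$, or a subword $[f_{\alpha,\beta},[u,u']]$ with $u$ starting with a smaller letter of the same factor. Each case is handled by the Jacobi identity and induction on degree and lexicographic order.

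You instead get spanning from Shirshov's special-bracketing lemma and independence from the Composition--Diamond lemma, i.e.\ the Gr\"obner--Shirshov route. What this buys:
\begin{itemize}
\item It gives a real account of linear independence, the harder half, which the paper outsources.
\item It explains why the $J$-letters must sit at the bottom of each $J_\alpha$. That ordering forces a mixed overlap of $f_{\alpha,\delta}f_{\alpha,\gamma}$ with $f_{\alpha',\gamma}f_{\alpha',\beta}$ ($\alpha\neq\alpha'$) to have $\gamma,\beta\in J$. The composition is then one of the multiplication table of the single algebra $L_\alpha$, and it is trivial by the Jacobi identity in $L_\alpha$. It is not a composition inside $L_*$, since $\delta$ may lie outside $J$, contrary to what your wording suggests.
\end{itemize}
The paper's explicit rewriting is what it needs downstream, because it tracks content and hence levels. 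Your reduction does this equally well. $[w]$ and $[w]_u$ are bracketings of the same associative word, so that step preserves content. Substituting a relation coming from a Malcev basis can only raise the level.

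Two small points to tighten:
\begin{itemize}
\item The substitution step introduces new letters $f_{\alpha,\tau}$, so ``a fixed finite alphabet'' is not the right termination argument. Induct on degree instead, and within a degree on the lexicographic order among the finitely many words of a given content.
\item The Composition--Diamond lemma is normally stated for a well-ordered alphabet, while the paper allows arbitrary total orders on $I$ and the $J_\alpha$. This is harmless, since you may choose those orders to be well-orders when setting up the construction, and the alphabet is finite in the applications.
\end{itemize}
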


Note that, as each word in $S$ of length $1$ is automatically a regular special word, it follows that each Lie algebra $L_{\alpha}$ embeds into $\overline{L}/Q$ and the Lie algebras $L_{\alpha}$ generate $\overline{L}_{\alpha}/Q$ (identifying each $L_{\alpha}$ with the span of $\{f_{\alpha,\beta} : \beta \in J_{\alpha}\}$). Similarly, $\overline{L}/Q$ has the universal property expected of an amalgamated free product: given Lie algebra homomorphisms $\varphi_{\alpha}: L_{\alpha} \to M$ for all $\alpha \in I$ such that $\varphi_{\alpha}|_{L_{*}} = \varphi_{\alpha'}|_{L_{*}}$ for all $\alpha,\alpha' \in I$, there is a unique $\tilde{\varphi}: \overline{L}/Q \to M$ extending each $\varphi_{\alpha}$. 

\subsection{The free amalgam of Lazard Lie Algebras}

Suppose $B$ is a Lazard Lie algebra over a field $\mathbb{F}$, with Lazard structure given by $(P_i)_{1\leq i\leq c+1}$. If $b \in B \setminus \{0\}$, we define the \emph{level} of $b$, denoted $\mathrm{lev}(b)$, to be the maximal $i$ such that $b \in P_{i}$, see \cite{DINCI}. 

Our goal is to define the amalgamated free product in the category of $c$-nilpotent LLAs. To do this, we will need to choose a specific basis:

\begin{definition}\label{def:Malcev}
    A tuple $b =(b_1,\ldots,b_n)\in B$ is called a \textit{Malcev} tuple over an LLA $A\seq B$ (or simply \textit{Malcev} over $A$) if $b$ is linearly independent over $A$ and for all $i = 1,\ldots, n$ we have 
    \[\Span_\F(AP_i(\vect{Ab})) = \Span_\F(A P_i(b)).\]
    Here we write $P_{i}(b)$ for the subtuple of $b$ contained in $P_{i}$. 
 If $B = \vect{Ab}$ we call $b$ a \textit{Malcev basis} of $B$ over $A$. When $A = 0$, we just refer to $b$ as a \emph{Malcev basis}. 
\end{definition}

\begin{lem} \label{lem:malcev}
Let $A$ and $B$ be $c$-nilpotent LLAs. 
\begin{enumerate}
    \item If $A \subseteq B$, then a Malcev basis for $A$ can be extended to a Malcev basis for $B$ over $A$. In particular, a Malcev basis always exists for any $c$-nilpotent LLA.
    \item If $a$ is a Malcev basis for $A$ and $b$ is a Malcev basis for $B$ over $A$, then $ab$ is a Malcev basis for $B$. 
    \item If $a_{1},\ldots, a_{n}$ is a Malcev basis for $A$, then for all non-zero $a \in A$, if 
    $$
    a = \sum_{i=1}^{n} \lambda_{i} a_{i},
    $$
    then $\mathrm{lev}(a) = \min \{\mathrm{lev}(a_{k}) : \lambda_{k} \neq 0\}$. 
\end{enumerate}
\end{lem}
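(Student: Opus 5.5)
The plan is to prove the three items in the order (1), (2), (3). Item (1) is a filtration-adapted basis extension, (2) is an intersection argument, and (3) follows from (2) with $A=0$ together with uniqueness of coordinates. Throughout, "$A\subseteq B$" means an LLA embedding, so $P_i(A)=A\cap P_i(B)$. Since $B=\vect{Ab}$ for a Malcev basis over $A$, the defining condition reads $A+P_i(B)=\Span_\F(A\cup P_i(b))$ for all $i$. I work with finite tuples; the same argument goes through with arbitrary bases if $B$ is infinite-dimensional.

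For (1), I build $b$ by descending induction on $i=c,c-1,\dots,1$. At stage $i$, I choose vectors of $P_i(B)$ that extend the set chosen so far, which is a basis of $(A+P_{i+1}(B))/A$, to a basis of $(A+P_i(B))/A$. This is possible because $A+P_i(B)$ is spanned by $A$ together with $P_i(B)$. The resulting tuple $b$ is linearly independent over $A$ by construction. Every vector chosen at a stage $j\ge i$ lies in $P_j(B)\subseteq P_i(B)$, so all of them appear in $P_i(b)$. Hence $\Span_\F(A\cup P_i(b))\supseteq A+P_i(B)$, and the reverse inclusion is trivial. At stage $1$ the vectors span $B$ modulo $A$, so $B=\vect{Ab}$. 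For the "in particular" statement, apply this with $A=0$. To extend a given Malcev basis $a$ of $A$ to one of $B$, take $b$ Malcev over $A$ as just constructed and use (2) to see that $ab$ is a Malcev basis of $B$.

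For (2), linear independence of $ab$ is immediate, since $a$ is a basis of $A$ and $b$ is independent over $A$. Fix $i$ and $x\in P_i(B)$. Because $b$ is Malcev over $A$, we can write $x=a'+y$ with $a'\in A$ and $y\in\Span_\F P_i(b)\subseteq P_i(B)$. Then $a'=x-y\in A\cap P_i(B)=P_i(A)$. Since $a$ is a Malcev basis of $A$, it follows that $a'\in\Span_\F P_i(a)$, and therefore $x\in\Span_\F P_i(ab)$. The other inclusion is trivial.

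For (3), let $m=\min\{\mathrm{lev}(a_k):\lambda_k\neq 0\}$. Every $a_k$ with $\lambda_k\ne 0$ lies in $P_m$, so $a\in P_m$ and $\mathrm{lev}(a)\ge m$. Suppose instead $a\in P_{m+1}$. By the Malcev property (with $A=0$), $a\in\Span_\F P_{m+1}(a_1,\dots,a_n)$. Uniqueness of coordinates in the basis $a_1,\dots,a_n$ then forces $\lambda_k=0$ whenever $a_k\notin P_{m+1}$, which contradicts the choice of $m$. The only delicate point in the whole lemma is bookkeeping: the induced filtration on $A$ must be the restricted one, and in (1) the vectors chosen at higher stages must be counted in $P_i(b)$. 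I expect no genuine obstacle.
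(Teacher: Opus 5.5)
Your proposal is correct and matches the paper's proof for (1) and (3): the paper also builds the basis by descending extension through $\Span_\F(AP_c(B))\subseteq\cdots\subseteq\Span_\F(AP_1(B))$, and it reads off the level from the Malcev property together with uniqueness of coordinates; you usefully make explicit that the new vectors at stage $i$ are taken from $P_i(B)$. The only difference is (2), which the paper does not prove but cites as \cite[Lemma 4.28(1)]{DINCI}; your direct argument, which uses $A\cap P_i(B)=P_i(A)$ to move the $A$-component of $x\in P_i(B)$ into $\Span_\F P_i(a)$, is a correct self-contained substitute, and invoking (2) at the end of (1) is not circular.
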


\begin{proof}
(1) Starting with a basis $a$ for $A$, we inductively extend $a$ first to a basis $ab_{c}$ for $\mathrm{span}_{\mathbb{F}}(A P_{c}(B))$, then extend $ab_{c}$ to a basis $ab_{c}b_{c-1}$ for $\mathrm{span}_{\mathbb{F}}(AP_{c-1}(B))$, and so on, at each step extending the basis $ab_{c}\overline{a}_{c-1}\ldots b_{i+1}$ for $\mathrm{span}_{\mathbb{F}}(AP_{i+1}(B))$ to a basis for the larger space $\mathrm{span}_{\mathbb{F}}(AP_{i}(B))$. Since we have $P_{c}(B) \subseteq P_{c-1}(B) \subseteq \ldots \subseteq P_{1}(B) = B$, we eventually obtain a Malcev basis for $B$ over $A$. The existence of a Malcev  basis for $B$ follows by taking $A = 0$ in this argument. 

(2) This is \cite[Lemma 4.28 (1)]{DINCI}.

(3) By the Malcev property, a non-zero $a \in A$ is in $P_{i}(A)$ if and only if $a$ is in the span of $P_{i}(a_{1},\ldots, a_{n})$. Hence, the level of $a$ is $i$ if and only if $a$ is in the span of $P_{i}(a_{1},\ldots, a_{n})$, but not in the span of $P_{i+1}(a_{1},\ldots, a_{n})$. This entails that the level of $a$ is $i$ if only if $\min \{\mathrm{lev}(a_{k}) : \lambda_{k} \neq 0\} = i$.
\end{proof}

Now we construct the free amalgam of $c$-nilpotent Lazard Lie algebras as follows. Suppose, keeping the notation of the previous subsection, $(L_{\alpha})_{\alpha \in I}$ is a family of $c$-nilpotent LLAs over $\mathbb{F}$ with a common sub-LLA $L_{*}$. We choose a Malcev basis $\{e_{\beta}: \beta \in J\}$ for $L_{*}$ and then we write $e_{\alpha,\beta} = e_{\beta}$ for all $\alpha \in I$ and $\beta \in J$. Next, for each $\alpha \in I$, we extend $\{e_{\alpha,\beta} : \beta \in J\}$ to a Malcev basis $\{e_{\alpha,\beta} : \alpha \in I, \beta \in J_{\alpha}\}$ for $L_{\alpha}$. This is possible by Lemma \ref{lem:malcev}(1) and (2). Then, as in the previous section, we form the formal set of generators $S = \{f_{\alpha,\beta}: \alpha \in I, \beta \in J_{\alpha}\}$ with $f_{\alpha,\beta} = f_{\alpha',\beta}$ for all $\alpha,\alpha' \in I$ and $\beta \in J$ and add the ordering as defined in the previous subsection. We obtain a function $\mathrm{lev} : S \to \mathbb{N}^{+}$ defined by
$$
\mathrm{lev}(f_{\alpha,\beta}) = \mathrm{lev}(e_{\alpha,\beta}),
$$
where the $\mathrm{lev}$ on the right-hand side refers to the level of $e_{\alpha,\beta}$ in the Lazard Lie algebra $L_{\alpha}$. Then for each associative word $v = a_{1}\ldots a_{n} \in S^{*}$, we define $\mathrm{lev}(v) = \sum_{i = 1}^{n} \mathrm{lev}(a_{i})$. If $[v]$ is the canonical bracketing of a regular associative word $v$, we define $\mathrm{lev}([v]) = \mathrm{lev}(v)$ (i.e. the level of a regular non-associative word is the same as the level of the corresponding associative word). Note that for a regular associative word (and its canonical bracketing $[w]$), the level assigned to $w$ is the sum of the levels of the content of $w$ (taking into account their multiplicity). 

If $b \in \overline{L}/Q$ is an element of the basis defined by Shirshov, we define $\mathrm{lev}(b)$ to be $\mathrm{lev}([v])$ for the unique special regular word (viewed as an element of a basis for $\overline{L}$) whose image in $\overline{L}/Q$ is $b$.  Finally, for an arbitrary element $\ell \in \overline{L}/Q$, we may express $\ell$ uniquely as a linear combination of (images of) regular special words 
$$
\ell = \sum_{i} \lambda_{i} b_{i},
$$
where $\lambda_{i} \in \mathbb{F}^{\times}$ and $b_{i}$ is the image of a regular special word in the basis for $\overline{L}$. Then we define $\mathrm{lev}(\ell) = \min_{i} \mathrm{lev}(b_{i})$. 

Now we define a series $(P_{i})_{i = 1}^{\infty}$ on $\overline{L}/Q$ by setting $P_{i} = \{ \ell \in \overline{L}/Q : \mathrm{lev}(\ell) \geq i\}$. By the proof of Lemma \ref{lem:malcev} (3), we clearly have 
\[P_i = \Span( \{b |b\textit{ is a regular special word with $\lev(b)\geq i$}\}).\] This turns out to be an infinite length Lazard series.

\begin{lem}
For all $n,m \in \mathbb{N}^{+}$, we have $[P_{n},P_{m}] \leq P_{n+m}$ in $\overline{L}/Q$.
\end{lem}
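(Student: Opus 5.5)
The plan is to compare the series $(P_i)$ with the natural weight filtration on the free Lie algebra $\overline{L}$, where $f_{\alpha,\beta}$ has weight $\mathrm{lev}(f_{\alpha,\beta})$. Let $F_i \subseteq \overline{L}$ be the span of the canonical bracketings of regular words $[v]$ with $\mathrm{lev}(v)\geq i$. Each such $[v]$ is a Lie monomial homogeneous of weight $\mathrm{lev}(v)$. Hence $F_i$ is exactly the span of all Lie monomials in $S$ of weight $\geq i$, and trivially $[F_n,F_m]\subseteq F_{n+m}$. Writing $\pi:\overline{L}\to\overline{L}/Q$ for the quotient map, it therefore suffices to prove $\pi(F_i)=P_i$ for all $i$. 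Then $[P_n,P_m]=[\pi(F_n),\pi(F_m)]=\pi([F_n,F_m])\subseteq \pi(F_{n+m})=P_{n+m}$.

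The inclusion $P_i\subseteq \pi(F_i)$ is immediate, since $P_i$ is spanned by images of special regular words of level $\geq i$, and these lie in $F_i$. The content is the reverse inclusion $\pi(F_i)\subseteq P_i$: every regular word $[v]$ of weight $k$ must be congruent modulo $Q$ to a linear combination of special regular words each of level $\geq k$.

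The key input is that the defining relators are ``filtered'' with the bracket as their lowest-weight part. For $q_{\alpha,\beta,\gamma}=[f_{\alpha,\beta},f_{\alpha,\gamma}]-\sum_\delta\lambda_\delta f_{\alpha,\delta}$, the Lazard property of $L_\alpha$ gives $\mathrm{lev}([e_{\alpha,\beta},e_{\alpha,\gamma}])\geq \mathrm{lev}(e_{\alpha,\beta})+\mathrm{lev}(e_{\alpha,\gamma})$. Because $(e_{\alpha,\delta})$ is a Malcev basis, Lemma \ref{lem:malcev}(3) then yields $\mathrm{lev}(f_{\alpha,\delta})\geq \mathrm{lev}(f_{\alpha,\beta})+\mathrm{lev}(f_{\alpha,\gamma})$ whenever $\lambda_\delta\neq 0$. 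So the rewriting rule $[f_{\alpha,\beta},f_{\alpha,\gamma}]\mapsto\sum_\delta\lambda_\delta f_{\alpha,\delta}$ never decreases weight. This is the only place the Malcev choice of basis is used, and it is exactly why it was imposed.

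Next I would rerun the reduction procedure underlying Fact \ref{fact: shirshov basis}, keeping track of weights. In Shirshov's proof, equivalently via the Composition--Diamond lemma with these relators forming a Gröbner--Shirshov basis whose leading words are $f_{\alpha,\gamma}f_{\alpha,\beta}$ with $\gamma>\beta$, a non-special regular word is rewritten modulo $Q$ in finitely many steps. Each step does two things: it uses the Jacobi identity and anticommutativity to expose a bracket $[f_{\alpha,\gamma},f_{\alpha,\beta}]$, which preserves homogeneous weight, and it substitutes a relator, which only raises weight by the previous paragraph. Arguing by induction on the well-founded order used there (length, then deg-lex), every monomial of weight $k$ reduces to a combination of special regular words of weight $\geq k$. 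This gives $\pi(F_k)\subseteq P_k$.

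The main obstacle is this last step. Fact \ref{fact: shirshov basis} is stated only as the existence of a basis, so I need to make the reduction procedure and its termination explicit enough to check weight monotonicity at every step. If that proves cumbersome, I would fall back on a lighter argument. Let $M\subseteq\overline{L}/Q$ be the sum of the $P_i$ and the images of weight-homogeneous monomials, and show by induction on word length that $[b,f]\in P_{\mathrm{lev}(b)+\mathrm{lev}(f)}$ for each special word $b$ and generator $f$. The general case $[P_n,P_m]$ then follows by the Jacobi identity, writing elements of $P_m$ as iterated brackets of generators. Even this fallback requires the same local rewriting estimate, namely that bracketing a special word with a letter and reducing never lowers level, so in either approach that estimate is the crux.
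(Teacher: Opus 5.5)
Your proposal is correct and is essentially the paper's argument. Your inclusion $\pi(F_N)\subseteq P_N$ is exactly the paper's Claim: every non-associative word whose content has total level $N$ is, modulo $Q$, a linear combination of special regular words of level $\geq N$. The paper proves this just as you outline, by induction on degree and then lexicographic order, using that Jacobi and anticommutativity moves preserve content while the relator substitution can only raise level, by the Malcev property (Lemma~\ref{lem:malcev}(3)). The one difference is that the paper makes explicit the reduction you left schematic, via Shirshov's three configurations of a non-special regular word: $[f_{\alpha,\beta},f_{\alpha,\gamma}]$, $[f_{\alpha,\beta},[f_{\alpha,\gamma},w]]$, and $[f_{\alpha,\beta},[u,u']]$ with $u$ starting in $f_{\alpha,\gamma}$, where $\beta>\gamma$.
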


\begin{proof}
 It suffices to show that if $w \in P_{n}$ and $w' \in P_{m}$ are regular special words, then $\mathrm{lev}([w,w']) \geq n+m$. The free Lie algebra $\overline{L}$ has a basis consisting of the regular words. Shirshov shows, moreover, that a non-associative word (viewed as an element in $\overline{L}$) can be expressed as a linear combination of regular words with the same content (\cite[Lemma 1]{shirshov1958free}), so we may write 
 $$
 [w,w'] = \sum_{i = 1}^{k} \lambda_{i}v_{i},
 $$
 where each $v_{i}$ is a regular word with the same content as $[w,w']$.  Thus, in order to prove the lemma, it suffices to show the following.

 \textbf{Claim}: Let $v$ be a nonassociative word and let $N$ be the sum of the levels of the elements of the content of $v$ added with multiplicity, then in $\overline{L}/Q$, the word $v$ may be expressed as 
 $$
 v = \sum_{i = 1}^{\ell} \mu_{i} u_{i}
 $$
 where each $u_{i}$ is a regular special word with $\mathrm{lev}(u_{i}) \geq N$. 

\emph{Proof of Claim}: We will argue by induction, assuming it is true for all words (not necessarily regular) of smaller degree and for words of the same degree, but which are lexicographically smaller. Note that the claim is obviously true for words of length $1$, which are automatically regular and special. 

As any nonassociative word can be written as a linear combination of regular words with the same content, we may assume $v$ is regular. In \cite[proof of Theorem 1]{shirshov1962hypothesis}, Shirshov observes that a regular word $v$ is not special if one of the three cases holds:
\begin{enumerate}
    \item $v$ has a subword of the form $[f_{\alpha,\beta}, f_{\alpha,\gamma}]$ for $\beta > \gamma$. 
    \item $v$ has a subword of the form $[f_{\alpha,\beta},[f_{\alpha,\gamma},w]]$ where $\beta > \gamma$ and $w$ is a regular word. 
    \item $v$ has a subword of the form $[f_{\alpha,\beta},[u,u']]$ where the regular associative word corresponding to $u$ starts with $f_{\alpha,\gamma}$, where $\beta > \gamma$. 
\end{enumerate}
In Case (1), working modulo $Q$, we may rewrite 
$$
[f_{\alpha,\beta},f_{\alpha,\gamma}] = \sum_{\tau} \lambda_{\tau} f_{\alpha,\tau},
$$
as a linear combination of words obtained by replacing the occurrence of $[f_{\alpha,\beta},f_{\alpha,\gamma}]$ with $f_{\alpha,\tau}$, each of which has lower degree. As we chose the basis to be Malcev, we know that $\mathrm{lev}(f_{\alpha,\tau})\geq \mathrm{lev}(f_{\alpha,\beta})+\mathrm{lev}(f_{\alpha,\gamma})$, whence the induction hypothesis yields the claim.

In Case (2), we apply the Jacobi identity:
$$
[f_{\alpha,\beta},[f_{\alpha,\gamma},w]] = [[f_{\alpha,\beta},f_{\alpha,\gamma}],w] + [f_{\alpha,\gamma},[f_{\alpha,\beta},w]]. 
$$
Note that both summands have the same content as the left hand side. As in (1), we may replace the term $[[f_{\alpha,\beta},f_{\alpha,\gamma}],w]$ with a linear combination of regular terms with smaller degree.  The term $[f_{\alpha,\gamma},[f_{\alpha,\beta},w]]$ is lexicographically smaller than $[f_{\alpha,\beta},[f_{\alpha,\gamma},w]]$ and thus, replacing the occurrence of $[f_{\alpha,\beta},[f_{\alpha,\gamma},w]]$ in $v$ with $[f_{\alpha,\gamma},[f_{\alpha,\beta},w]]$ results in a word with the same content that is lexicographically smaller. Thus Case (2) follows by induction. 

In Case (3), we again use the Jacobi identity, to write 
$$
[f_{\alpha,\beta},[u,u']] = [[f_{\alpha,\beta},u],u'] + [u,[f_{\alpha,\beta},u']]. 
$$
Since $u$ is regular starting with $f_{\alpha,\gamma}$ and $\beta > \gamma$, we know $[f_{\alpha,\beta},u]$ is regular and falls into one of Cases (1), (2), or (3). As the degree of $[f_{\alpha,\beta},u]$ is smaller than $v$, the induction hypothesis entails that we may write $[f_{\alpha,\beta},u]$ as a linear combination of regular special words which are either of smaller degree or lexicographically smaller. Expanding by linearity, then, expresses $[[f_{\alpha,\beta},u],u']$ as a linear combination of terms either of smaller degree or which are lexicographically smaller. The term $[u,[f_{\alpha,\beta},u']]$ starts in $f_{\alpha,\gamma}$ and thus the resulting term is also lexicographically smaller than $v$. Thus Case (3) additionally follows by induction, completing the proof.
\end{proof}

It follows from the above lemma that, in particular, each $P_{i}$ is an ideal of $\overline{L}/Q$. Now we form a new Lie algebra $L' = (\overline{L}/Q)/P_{c+1}$ and view it as an $c$-nilpotent LLA via interpreting the predicates as the images of the $P_{i}$ for $1 \leq i \leq c+1$. 

Now we specialize to the situation that $I = \{1,2\}$ and we label $C = L_{*}$, $A = L_{1}$ and $B = L_{2}$.  In this case, we want to show that $L' \cong A \otimes_{C} B$ in the sense of Definition \ref{def:freeamalgambaudisch}.

\begin{lem}
    We have an isomorphism of LLAs: $L' \cong A \otimes_{C} B$. 
\end{lem}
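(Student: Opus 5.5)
We verify the four clauses of Definition \ref{def:freeamalgambaudisch} for $L'$ with the maps $f_1: A\to L'$, $g_1: B\to L'$ given by $e_{1,\beta}\mapsto f_{1,\beta}+P_{c+1}$ and $e_{2,\beta}\mapsto f_{2,\beta}+P_{c+1}$. Since $A$ and $B$ embed as Lie algebras into $\overline{L}/Q$ via Fact \ref{fact: shirshov basis}, $f_1$ and $g_1$ are Lie homomorphisms, and they agree on $C$ because $f_{1,\beta}=f_{2,\beta}$ for $\beta\in J$. Clause (2) is immediate, since $\overline{L}/Q$ is generated by the images of $A$ and $B$, hence so is $L'$.

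The first real point is that $f_1$ is an \emph{LLA embedding}: injective and satisfying $f_1(P_i(A))=f_1(A)\cap P_i(L')$. By Lemma \ref{lem:malcev}(3), a nonzero $a=\sum\lambda_\beta e_{1,\beta}\in A$ has level $\min\{\mathrm{lev}(e_{1,\beta}):\lambda_\beta\neq 0\}$. Its image $\sum\lambda_\beta f_{1,\beta}$ is already written in the Shirshov basis, since length-one words are regular and special. So by the definition of $\mathrm{lev}$ on $\overline{L}/Q$, the image has the same level. This level is at most $c$, so the image is not in $P_{c+1}$, and the filtration is preserved and reflected. The same argument applies to $g_1$.

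For clause (3), the strong amalgam property, take $x\in f_1(A)\cap g_1(B)$. Lift $x$ to $\sum\lambda_\beta f_{1,\beta}$ and $\sum\mu_\gamma f_{2,\gamma}$. Their difference lies in $P_{c+1}$, but it is a combination of length-one basis words, each of level $\le c$. Hence by uniqueness of the basis expansion the difference is $0$. Distinctness of the symbols $f_{1,\beta}$ and $f_{2,\gamma}$ for $\beta,\gamma\notin J$ then forces all coefficients outside $J$ to vanish, so $x\in C'$.

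For clause (4), freeness, let $f:A\to D$ and $g:B\to D$ be LLA homomorphisms into a $c$-LLA $D$ that agree on $C$. The universal property of $\overline{L}/Q$ gives a Lie homomorphism $\tilde\varphi:\overline{L}/Q\to D$. The main obstacle is showing that $\tilde\varphi(P_i)\subseteq P_i(D)$, and in particular $\tilde\varphi(P_{c+1})=0$, so that $\tilde\varphi$ factors through $L'$ as an LLA homomorphism. Since $P_i$ is spanned by regular special words of level $\ge i$, it suffices to check this on such words. A word $[w]$ is an iterated bracket of generators $f_{\alpha,\beta}$, so $\tilde\varphi([w])$ is the same iterated bracket of the elements $f(e_{1,\beta})$ or $g(e_{2,\beta})$. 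These lie in $P_{\mathrm{lev}(e_{\alpha,\beta})}(D)$ because $f$ and $g$ respect the filtrations. Repeated use of $[P_n(D),P_m(D)]\subseteq P_{n+m}(D)$ then places the bracket in $P_{\mathrm{lev}(w)}(D)$, and $P_{c+1}(D)=0$. Uniqueness of the induced map holds because $L'$ is generated by $f_1(A)\cup g_1(B)$. Finally, uniqueness of the free amalgam, as noted after Definition \ref{def:freeamalgambaudisch}, yields $L'\cong A\otimes_C B$.
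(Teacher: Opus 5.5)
Your proof is correct and uses the same ingredients as the paper's. The length-one Shirshov basis words give both the LLA embeddings and the strong amalgam property, and the same induction on levels shows that the universal map out of $\overline{L}/Q$ respects the filtration and kills $P_{c+1}$.

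The only difference is one of organization. You run that induction against an arbitrary $c$-LLA $D$, verify clause (4) directly, and conclude by uniqueness of free amalgams. The paper instead applies it only to $D = A\otimes_C B$, the free amalgam already constructed in \cite{DINCI}, and compares $L'$ with that object via maps in both directions. Your version has the small advantage of not relying on that prior existence result.
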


\begin{proof}
    First, we observe that $A$, $B$, and $C$ embed into $L'$.  To see this, note that $P_{c+1}$ is spanned by the regular special words $v$ with $\mathrm{lev}(v) \geq c+1$. Each basis element $f_{1,\beta}$ of $A$ is a regular special word of level $\leq c$ and hence the elements $\{f_{1,\beta} : \beta \in J_{1}\}$ remain linearly independent in the quotient $(\overline{L}/Q)/P_{c+1}$. It follows that the map sending $e_{1,\beta}$ to the image of $f_{1,\beta}$ for each $\beta \in J_{1}$ is an embedding of Lie algebras from $A$ into $L'$ and it is clear from the construction and Lemma \ref{lem:malcev}(3) that it respects the LLA structure on $A$. A symmetric argument implies that the map sending $e_{2,\beta}$ to the image of $f_{2,\beta}$ in $L'$ is also an embedding of LLAs. Both of these embeddings restrict to give the same embedding of $C$ into $L'$. Moreover, the linear independence of regular special words entails that the image of $A$ and $B$ in $L'$ intersect exactly in the image of $C$. 

    The embeddings of $A$ and $B$ into $L'$ (which agree on $C$) induce a unique homomorphism of LLAs $\varphi: A \otimes_{C} B \to L'$. We also have a map of Lie algebras $\overline{L}/Q \to A \otimes_{C} B$ by the universal property of the amalgamated free product of Lie algebras. By an easy induction, the image of $P_{i}$ in $A \otimes_{C} B$ is contained in $P_{i}(A \otimes_{C} B)$ so this map induces an LLA homomorphism $\psi: L' \to A \otimes_{C}B$. The compositions $\varphi \circ \psi$ and $\psi \circ \varphi$ induce the identity on $A$ and $B$ and thus are the identity maps on $L'$ and $A \otimes_{C} B$, respectively. This shows $L' \cong A \otimes_{C} B$. 
\end{proof}

\section{Topological genericity}

Ivanov observed that if $G$ is a finite $c$-nilpotent group of exponent $p$ (for $p > c$) with $\gamma_{i}(G) = Z_{c+1-i}(G)$ for $i = 1, \ldots, c$, then any embedding $\varphi: G \to H$ to a $c$-nilpotent group $H$ will necessarily respect any Lazard structure placed on $G$, since the Lazard series for $G$ is unique, see Lemma \ref{lm:lazardUL}. Thus, if we can show that if $G$ is a finite $c$-nilpotent group of exponent $p$ (with $p > c$), there is a finite $c$-nilpotent group $G_{*}$ with $G \leq G_{*}$ and $\gamma_{i}(G_{*}) = Z_{c+1-i}(G_{*})$ for all $i = 1,\ldots, c$, then we would obtain the cofinal amalgamation property (so in particular the weak amalgamation property) for the class of $c$-nilpotent groups of exponent $p$ in the group language. This entails, by Kabluchko and Tent \cite{kabluchko2022weakfraisselimits} that there is a generic isomorphism type (in the sense of an isomorphism type being comeager in the space of all $c$-nilpotent groups of exponent $p$ with underlying set $\omega$). 

We will solve the corresponding problem for Lie algebras:
\begin{theorem}\label{thm:mainliealgebra}
    Let $A$ be a nonzero finite-dimensional $c$-nilpotent Lie algebra over $\mathbb{F}$. View $A$ as an LLA by interpreting $P_{i}(A) = \gamma_{i}(A)$. Let $B = \langle x \rangle \otimes_{0} A$ where $\langle x \rangle$ is a $1$-dimensional abelian LLA generated by $x$ and in which $x$ is at level $1$. Then $B$ is finite dimensional and $\gamma_{i}(B) = Z_{c+1-i}(B)$ for all $i = 1, \ldots, c$. 
\end{theorem}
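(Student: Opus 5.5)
The plan is to work with the explicit model of $B$ from the previous lemma. Take $A$ with $P_i(A)=\gamma_i(A)$, choose a Malcev basis of $A$, and identify $B=\langle x\rangle\otimes_0 A$ with $L'=(\overline{L}/Q)/P_{c+1}$. Here $C=0$, so $J=\emptyset$, and the alphabet $S$ is $x$ together with the Malcev basis of $A$. By Fact \ref{fact: shirshov basis}, $B$ then has a basis of special regular words of level $\le c$.

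\emph{Finite dimensionality.} Every letter has level $\ge 1$, so such words have length $\le c$. The alphabet is finite, hence there are only finitely many such words.

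\emph{Reduction.} For a $c$-nilpotent Lie algebra, $\gamma_i(B)\subseteq Z_{c+1-i}(B)$ always holds. So it suffices to prove two things:
\[ P_i(B)=\gamma_i(B)\quad\text{and}\quad Z_{c+1-i}(B)\subseteq P_i(B). \]

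\emph{First claim.} The inclusion $\gamma_i\subseteq P_i$ follows from the Lazard property, by the lemma $[P_n,P_m]\le P_{n+m}$. For the converse, $P_i$ is spanned by special regular words of level $\ge i$. Each such word is an iterated bracket of letters $f$ with $f\in P_{\mathrm{lev}(f)}(A)=\gamma_{\mathrm{lev}(f)}(A)$. An iterated bracket of elements of $\gamma_{j_1},\dots,\gamma_{j_r}$ lies in $\gamma_{j_1+\dots+j_r}(B)$, and $j_1+\dots+j_r$ is exactly the level of the word, which is $\ge i$. Hence $P_i\subseteq\gamma_i$.

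\emph{Second claim, by contrapositive.} Let $\ell\notin P_i$. Then $\ell$ has $k:=\mathrm{lev}(\ell)<i$, and I will exhibit $c-k\ge c+1-i$ elements $y_1,\dots,y_{c-k}$ with $[\ell,y_1,\dots,y_{c-k}]\neq 0$. This shows $\ell\notin Z_{c-k}(B)\supseteq Z_{c+1-i}(B)$.

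\emph{Passing to the associated graded.} I will use $\mathrm{gr}\,B=\bigoplus_j P_j/P_{j+1}$. The Shirshov basis words are homogeneous for the level. The Claim in the proof of the Lazard lemma shows that rewriting only raises the level. Together these give that $\mathrm{gr}\,B$ is the degree-$\le c$ truncation of the graded free product $G:=\mathrm{gr}A * \F x$ of graded Lie algebras, with $x$ in degree $1$. Let $\bar\ell$ be the leading homogeneous part of $\ell$, of degree $k$. It suffices to show that $\mathrm{ad}(y_{c-k})\cdots\mathrm{ad}(y_1)\bar\ell\neq 0$ in $G$ for suitable homogeneous $y$'s of degree $1$. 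Indeed, this element has degree exactly $c$, so it survives the truncation, and it is the leading part of the corresponding bracket in $B$.

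\emph{Choice of the $y$'s.} If $\bar\ell\notin \F x$, take all $y_j=x$. If $\bar\ell=\lambda x$ (so $k=1$), first bracket with some $a\in A\setminus P_2(A)$, which exists since $A\neq0$. Note $[x,a]\neq 0$ and $[x,a]\notin\F x$, so after that take $y_j=x$.

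\emph{Key step and main obstacle: injectivity of $\mathrm{ad}\,x$.} What remains is that $\mathrm{ad}\,x$ is injective on $G\setminus \F x$; this is the step I expect to be hardest. I would prove it by embedding $G$ into its enveloping algebra $U(G)=U(\mathrm{gr}A)*\F[x]$, the free product of associative algebras. This is legitimate by PBW, and because the free product of enveloping algebras is the enveloping algebra of the free product. In $U(\mathrm{gr}A)*\F[x]$ one has the normal form of alternating reduced products. From it, $ux=xu$ forces $u\in\F[x]$. An element of $G\cap\F[x]$ of positive degree is a Lie element, hence a multiple of $x$. If this normal-form argument proves delicate, a fallback is a direct Shirshov-word argument: order $x$ as the least letter and track the lexicographically leading special word of $[\bar\ell,x]$.
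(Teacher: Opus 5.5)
Your proposal is correct. Its skeleton matches the paper: finite dimensionality, the reduction to $P_i(B)=\gamma_i(B)$ and $Z_{c+1-i}(B)\subseteq P_i(B)$, and $P_i\subseteq\gamma_i$ via iterated brackets of letters. The difference is in how you prove that the iterated brackets do not vanish.

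The paper stays inside the Shirshov basis of $B$. It orders the alphabet so that $x$ is the \emph{greatest} letter and a level-one $a_{\alpha_*}$ is the least. Then for every special regular word $w\neq x$, the word $xw$ is again special regular with canonical bracketing $[x,[w]]$, so $\mathrm{ad}\,x$ simply prefixes basis words. Bracketing $\ell$ on the left with $c+1-i$ copies of $x$ kills the terms of level $\geq i$ and the term $x$. It sends the remaining terms to distinct basis words of level $\leq c$, which are linearly independent. If $x$ is the only term of level $<i$, right-bracketing $c-1$ times with $a_{\alpha_*}$ gives a nonzero multiple of the basis word $xa_{\alpha_*}^{c-1}$. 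No associated graded or enveloping algebra is needed.

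You instead pass to $\mathrm{gr}\,B$ and prove that the centralizer of $x$ in $\mathrm{gr}A*\F x$ is $\F x$. This is a cleaner structural statement and does not depend on the chosen ordering, but it costs two inputs that you only sketch.
\begin{itemize}
\item \emph{Identifying $\mathrm{gr}\,B$.} The precise reason is that Shirshov's basis depends only on the ordered bases of the factors, and the symbols of a Malcev basis form a basis of $\mathrm{gr}A$. Hence the canonical map $\mathrm{gr}A*\F x\to\mathrm{gr}(A*\F x)$ sends the special regular words of level $j$ bijectively onto a basis of $P_j/P_{j+1}$.
\item \emph{The centralizer.} The normal form gives $U(\mathrm{gr}A)*\F[x]=\F[x]\oplus\bigoplus_{n\geq1}\F[x]\otimes M_n\otimes\F[x]$. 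Here $M_n$ is spanned by alternating products with $n$ factors from the augmentation ideal of $U(\mathrm{gr}A)$ and interior factors from $x\F[x]$. Left and right multiplication by $x$ act on the two outer factors. Writing $\F[x]\otimes M_n\otimes\F[x]\cong\F[s,t]\otimes M_n$, the equation $xu=ux$ becomes $(s-t)u_n=0$ in a free $\F[s,t]$-module, which forces $u_n=0$. This uses that $\F[x]$ is a domain.
\item \emph{Characteristic $p$.} In characteristic $p$, which is the case of interest, the step $G\cap\F[x]=\F x$ must come from PBW, not from a description of primitive elements, since $x^p$ is primitive in $\F[x]$.
\end{itemize}

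Finally, your fallback with $x$ as the least letter is harder than necessary, because $[w,x]$ is in general not the canonical bracketing of $wx$. Taking $x$ to be the greatest letter, as the paper does, makes the key injectivity of $\mathrm{ad}\,x$ immediate on basis words.
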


\begin{proof}
    Let $(a_{\alpha})_{\alpha \in J}$ be a Malcev basis for $A$. Then let $S = \{x\} \cup \{a_{\alpha} : \alpha \in J\}$ and well-order $S$ so that $x$ is the greatest element and the least element is $a_{\alpha_{*}} \in A \setminus \gamma_{2}(A)$ (so has level $1$). Then we know that $B$ is spanned by the regular special words from the alphabet $S$ with level $\leq c$. 

    We always have $\gamma_{i}(B) \subseteq P_{i}(B) \subseteq Z_{c+1-i}(B)$, so it is enough to prove $Z_{c+1-i}(B) \subseteq P_{i}(B)$ and $P_{i}(B) \subseteq \gamma_{i}(B)$ for $i = 1, \ldots, c$. 

    First, we show $Z_{c+1-i}(B) \subseteq P_{i}(B)$ for $i = 1, \ldots, c$. This is trivial for $i = 1$ so fix some $i \in \{2, \ldots, c\}$ and assume $\ell \not\in P_{i}(B)$. Write 
    $$
    \ell = \sum_{k = 1}^{n} c_{k} w_{k}
    $$
    where $c_{k} \in \mathbb{F}^{\times}$ and $w_{k}$ is a regular special word.  By the definition of level, we have $\min_{k} \mathrm{lev}(w_{k}) = j < i$. We have to consider two cases.  First, suppose the only regular special word $w_{k}$ with $\mathrm{lev}(w_{k}) < i$ is $w_{k_{*}} = x$. Then $j = 1$. Up to a reordering of terms, we may assume, then, that 
    $$
    \ell = c_{1}x + \sum_{k=2}^{n} c_{k} w_{k}
    $$
    where each $w_{k}$ is a regular special word with $\mathrm{lev}(w_{k}) > 1$. Then we bracket with $a_{\alpha_{*}}$ on the right $c-1$ times with $\ell$ to obtain the following:
    \begin{eqnarray*}
    [[[\ldots [\ell,a_{\alpha_{*}}],a_{\alpha_{*}}],\ldots],a_{\alpha_{*}}] &=& c_{1}[[[\ldots [x,a_{\alpha_{*}}],a_{\alpha_{*}}],\ldots],a_{\alpha_{*}}] + \sum_{k=2}^{n} c_{k}[[[\ldots [w_{k},a_{\alpha_{*}}],a_{\alpha_{*}}],\ldots],a_{\alpha_{*}}] \\
    &=& c_{1}[[[\ldots [x,a_{\alpha_{*}}],a_{\alpha_{*}}],\ldots],a_{\alpha_{*}}],
    \end{eqnarray*}
    where the first equality holds by linearity and the second follows because each term $[[[\ldots [w_{k},a_{\alpha_{*}}],a_{\alpha_{*}}],\ldots],a_{\alpha_{*}}]$ has level $c-1 + \mathrm{lev}(w_{k}) > c$ and hence must equal $0$. It is clear that $[[[\ldots [x,a_{\alpha_{*}}],a_{\alpha_{*}}],\ldots],a_{\alpha_{*}}]$ is the canonical bracketing of the associative regular special word $xa_{\alpha_{*}}a_{\alpha_{*}}\ldots a_{\alpha_{*}}$ with $a_{\alpha_{*}}$ occurring $c-1$ times. Its level is $\mathrm{lev}(x) + (c-1)\mathrm{lev}(a_{\alpha_{*}}) = c$. Thus this term is not zero and we obtain that $\ell \not\in Z_{c-1}(B)$, hence $\ell \not\in Z_{c+1-i}(B)$.

    The next case we consider is just \emph{not} the previous case, so there is at least one regular special word $w_{k} \neq x$ with $\mathrm{lev}(w_{k}) = j = \min_{k} \mathrm{lev}(w_{k})< i$. Let $X \subseteq \{1, \ldots, n\}$ denote the set of $k$ such that $w_{k} \neq x$ and $\mathrm{lev}(w_{k}) < i$. Then we take right-normed brackets with $c+1-i$ many $x$ to obtain 
    \begin{eqnarray*}
        [x,[x,\ldots[x,\ell]\ldots]] &=& \sum_{k = 1}^{n} c_{k}[x,[x,\ldots[x,w_{k}]\ldots]] \\
        &=& \sum_{k \in X} c_{k}[x,[x,\ldots[x,w_{k}]\ldots]]
    \end{eqnarray*}
    Now for each $k \in X$, the term $[x,[x,\ldots[x,w_{k}]\ldots]]$ is a regular special word (since we chose $x$ to be the greatest element of $S$) of level $\mathrm{lev}(w_{k}) + (c+1-i) \leq c$. Thus these terms are linearly independent in $B$ and we obtain $[x,[x,\ldots[x,\ell]\ldots]] \neq 0$. This shows $\ell \not\in Z_{c+1-i}(B)$. These two cases together show that $Z_{c+1-i}(B) \subseteq P_{i}(B)$. 

    Next we show for each $i = 1, \ldots, c$ that $P_{i}(B) \subseteq \gamma_{i}(B)$. It suffices to show that if $w \in P_{i}(B)$ is a regular special word, then $w \in \gamma_{i}(B)$. We prove this by induction on the length of $w$. For $w$ of length $1$, this is immediate since either $w = x$ so $w \in P_{1}(B) \setminus P_{2}(B)$ and there is nothing to show, or $w = a_{\alpha}$ for some $\alpha \in J$ in which case $w \in P_{i}(B)$ if and only if $w \in \gamma_{i}(A) \subseteq \gamma_{i}(B)$ by construction. Now suppose the desired containment was shown for all regular special words of length less than that of $w$ and write the corresponding word $w = uv$ with $v$ the maximal length regular proper suffix of $w$. Then both $u$ and $v$ are regular special words and the canonical bracketing of $w$ is $[w] = [[u],[v]]$. Then, by construction, $w \in P_{i}(B)$ if and only if $u \in P_{j}(B)$ and $v \in P_{k}(B)$ for some $j,k$ with $j + k = i$. This latter condition implies $u \in \gamma_{j}(B)$ and $v \in \gamma_{k}(B)$ by induction, and this implies $w = [u,v] \in \gamma_{j+k}(B) = \gamma_{i}(B)$ since the lower central series is a Lazard series. This completes the proof. 
\end{proof}

\subsection{Translating to groups}

Recall that $\mathbb{K}_{c,p}$ denotes the class of groups of exponent $p$ and nilpotency class at most $c$ (viewed as structures in the language of groups) and $\mathbb{K}^{P}_{c,p}$ denotes the same class of groups, but in a language extending the language of groups with $c+1$ predicates $(P_{i})_{i = 1}^{c+1}$, which are interpreted to name a Lazard series. We analyzed $\mathbb{K}^{P}_{c,p}$ in \cite{DINCI}, where we showed that this class comes with a notion of free amalgamation, which allowed for model-theoretic applications. In this section, we will leverage the results of the previous section to relate the generic $c$-nilpotent group of exponent $p$ to the Fra\"iss\'e limit of $\mathbb{K}_{c,p}$.  

In parallel, we will obtain similar applications for torsion-free $c$-nilpotent groups. Let $\mathbb{K}_{c,\mathrm{tf}}$ denote the class of finitely generated torsion-free groups of nilpotency class $\leq c$. We view each $G \in \mathbb{K}_{c,\mathrm{tf}}$ as a structure in the language of groups. Torsion-free nilpotent groups are often approached by studying their Malcev completion (see Definition Fact \ref{fact: malcev completion}(3) below), which is an analogue of the divisible hull of a torsion-free abelian group. A group $G$ is called $\mathbb{Q}$-powered if it is torsion-free and, for all $g \in G$ and natural numbers $n \geq 2$, $x^{n} = g$ has a unique solution in $G$. Let $\mathbb{K}_{\mathbb{Q},c}$ denote the class of finitely generated $\mathbb{Q}$-powered groups of nilpotency class $\leq c$. We view each $G \in \mathbb{K}_{\mathbb{Q},c}$ as a structure in the language of groups, together with a unary function symbol $\sqrt[n]{-}$ interpreted as the unique function maps each $g \in G$ to the unique $h$ with $h^{n} = g$, for all $n \geq 2$. We refer to this language as the \emph{language of }$\mathbb{Q}$\emph{-powered groups}. Let $\mathbb{K}_{\mathbb{Q},c}^{P}$ denote the class of finitely generated $\mathbb{Q}$-powered groups of nilpotency class $\leq c$ in a language expanded to include predicates for a Lazard series. We write $\mathbb{G}_{c,\mathbb{Q}}$ for the Fra\"iss\'e limit of $\mathbb{K}^{P}_{c,\mathbb{Q}}$.

We will analyze nilpotent groups by translating questions about them into questions about Lie algebras. For the case of $c$-nilpotent groups of exponent $p$, for a prime $p > c$, the machinery underlying this translation is \emph{Lazard correspondence}, which associates to each nil-$c$ group of exponent $p$ a nil-$c$ Lie algebra over the field $\mathbb{F}_{p}$. In \cite{DINCI}, it was pointed out that this correspondence gives the uniform bi-definability of nil-$c$ groups of exponent $p$ and of nil-$c$ Lie algebras over $\mathbb{F}_{p}$ (in both to the pure languages of groups and Lie algebras and to their respective expansions to languages with predicates for Lazard series).  The focus in \cite{DINCI} on Lie algebras over $\mathbb{F}_{p}$ was natural in the setting of studying $\aleph_{0}$-categorical Fra\"iss\'e limits, but here will will make parallel use of a similar correspondence, the \emph{Malcev correspondence}, 

Suppose $c < p$, for $p$ an odd prime.  If $G$ is a group of nilpotency class $\leq c$ which is either of exponent $p$ or $\mathbb{Q}$-powered, we define $L_{G}$ to be a structure with same underlying set and operations $+_{L_{G}}$ and $[\cdot,\cdot]_{L_{G}}$ defined by 
$$
g +_{L_{G}} h = h_{1}(g,h) =  gh[g,h]^{-\frac{1}{2}}[g,g,h]^{-\frac{1}{12}}[h,g,h]^{\frac{1}{12}}\ldots 
$$
and 
$$
[g,h]_{L_{G}} = h_{2}(g,h) = [g,h][g,g,h]^{\frac{1}{2}}[h,g,h]^{\frac{1}{2}}\ldots 
$$
where the brackets on the right denote group commutators in $G$. We will usually omit the subscripts. Since the nilpotence class of $G$ is at most $c$, it turns out that both $h_{1}$ and $h_{2}$ are finite products of group commutators in $G$ raised to powers in $\mathbb{Z}_{(p)}$, where $\mathbb{Z}_{(p)}$ denotes the set of $q \in \mathbb{Q}$ such that if $q = \frac{l}{m}$ is in reduced form, then $\mathrm{gcd}(m,p) = 1$ (in the case that $G$ has exponent $p$, it is crucial that $p$ does not divide the denominator, but, in the case that $G$ is torsion-free, it only matters that the exponents are in $\mathbb{Q}$). Since the group $G$ is either is of exponent $p$ or $\mathbb{Q}$-powered, it makes sense to raise any element to powers in $\mathbb{Z}_{(p)}$ and these yield well-defined operations on $L_{G}$.   The coefficients of $h_{1}$ and $h_{2}$ are explicitly described in \cite{Cicaloetal}.  

Conversely, given a Lie algebra $L$ over $\mathbb{F}_{p}$ or $\mathbb{Q}$ of nilpotency class at most $c$, one defines $G_{L}$ to be the structure with the same underlying set and with a binary operation $*_{G_{L}}$ defined by 
$$
a *_{G_{L}} b = H(a,b) =  a + b + \frac{1}{2} [a,b] + \frac{1}{12}[a,a,b] - \frac{1}{12} [b,a,b] + \ldots 
$$
This is the Baker-Campbell-Hausdorff formula, where the brackets on the right-hand side are the Lie bracket of $L$. This is usually an infinite sum but, since $L$ is of nilpotency class  at most $c$, the function $H$ can be written as a finite linear combination of Lie monomials with coefficients in $\mathbb{Z}_{(p)}$.  This is a $\mathbb{Q}$-linear combination of monomials but can also be reduced mod $p$ can thus be viewed as an $\mathbb{F}_{p}$-linear combination of Lie monomials.  

The following fact simultaneously summarizes the Malcev and Lazard correspondences.

\begin{fact} \cite[Chapter 10]{Khu98}
Suppose $c < p$ for an odd prime $p$.  To every $\mathbb{Q}$-powered group $G$ (group of exponent $p$) and nilpotence class $\leq c$, the Malcev correspondence (Lazard correspondence) associates a Lie algebra $L_{G}$ over $\mathbb{Q}$ (over $\mathbb{F}_{p}$) with the same underlying set $L_{G} = G$ and with operations $a+b = h_{1}(a,b)$ and $[a,b] = h_{2}(a,b)$, and $ra = a^{r}$ for every $r \in \mathbb{Q}$ ($r \in \mathbb{F}_{p}$).  Conversely, for every Lie algebra $L$ over $\mathbb{Q}$ (over $\mathbb{F}_{p}$) of nilpotency class $\leq c$, there is a corresponding group $G_{L}$ which is $\mathbb{Q}$-powered (of exponent $p$) with the same underlying set and group operation defined by 
$$
a * b = H(a,b)
$$
and $a^{r} = ra$ for $r \in \mathbb{Q}$ ($r \in \mathbb{F}_{p}$). These operations are inverses to each other:  as Lie algebras over $\mathbb{Q}$ (over $\mathbb{F}_{p}$), we have $L_{G_{L}} = L$ and additionally $G_{L_{G}} = G$ as groups. 
\end{fact}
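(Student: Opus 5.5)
The plan is to reduce everything to universal identities in free objects of class $c$. Concretely, I would first establish the correspondence over $\mathbb{Q}$ inside the free nilpotent Lie algebra $\mathcal{L}$ of class $c$ on generators $a,b,c_0$ (and the free associative algebra truncated in degree $>c$). I would then transfer the same identities to $\mathbb{Z}_{(p)}$-coefficients and reduce modulo $p$.

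\emph{Step 1: from Lie algebras to groups.} In the free associative algebra over $\mathbb{Q}$ on noncommuting variables, truncated in degree $>c$, the elements $\exp(a)$ and $\log(1+u)$ are finite sums. The classical Baker--Campbell--Hausdorff theorem says that $\log(\exp(a)\exp(b))$ is a Lie element $H(a,b)$. Associativity of multiplication in the associative algebra then gives $H(H(a,b),c_0)=H(a,H(b,c_0))$ in $\mathcal{L}$. We also have $H(a,0)=a$, $H(a,-a)=0$ and $H(ra,sa)=(r+s)a$. These are identities in the free object, so they hold in every class-$\le c$ Lie algebra $L$ over $\mathbb{Q}$. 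Hence $(L,H)$ is a group. In it, $x\mapsto rx$ gives unique roots, so the group is $\mathbb{Q}$-powered. It is nilpotent of class $\le c$ because the group commutator $[x,y]_{G_L}$ is $[x,y]_L$ plus brackets of higher degree. By induction on $i$, the group lower central series then lies inside $\gamma_i(L)$.

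\emph{Step 2: the inverse formulas.} In the free $\mathbb{Q}$-powered nilpotent group $F$ of class $c$ (equivalently $G_{\mathcal{L}}$), the group words $h_1,h_2$ are obtained by solving triangularly, degree by degree, the equations
\[
H\bigl(h_1(a,b)\bigr)\ \text{corresponds to}\ a+b, \qquad h_2(a,b)\ \text{corresponds to}\ [a,b].
\]
Namely, we write $a+b$ and $[a,b]$ in $\mathcal{L}$ as $H$-products of $\mathbb{Q}$-powers of iterated group commutators, using a Hall basis and induction on degree. Since these are identities between the group law $H$ and the Lie operations in $G_{\mathcal{L}}$, we get $L_{G_L}=L$ for every $L$. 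For the other direction, $G_{L_G}=G$, I would observe that every $\mathbb{Q}$-powered nilpotent group of class $\le c$ is a quotient of a free one. The free one is $G_{\mathcal{L}}$ by Step 1 together with the universal property of the free $\mathbb{Q}$-powered nilpotent group. The identities $h_1,h_2$ then satisfy the Lie axioms and $H(h_1\text{-sum})$ recovers the group product in the free case, so they descend to quotients.

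\emph{Step 3: the exponent-$p$ case.} This is where I expect the main obstacle. I would prove Lazard's integrality lemma: the homogeneous degree-$n$ component of $H$ has coefficients whose denominators involve only primes $\le n$. I would try to get this first from the Dynkin form of BCH combined with the Goldberg/Lazard estimates on $p$-adic valuations of the coefficients of $\log(\exp a\exp b)$. The same then follows for $h_1,h_2$ by the triangular inversion of Step 2. Since $p>c$, all identities of Steps 1--2 hold over $\mathbb{Z}_{(p)}$ in the free class-$c$ Lie ring over $\mathbb{Z}_{(p)}$ and in the free class-$c$ $\mathbb{Z}_{(p)}$-powered group. Reducing mod $p$, they give the correspondence between Lie algebras over $\mathbb{F}_p$ of class $\le c$ and groups of exponent $p$ of class $\le c$. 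In such groups, $\mathbb{Z}_{(p)}$-powers are well defined because $x^p=1$. The group $G_L$ has exponent $p$ because $x^{p}=px=0$. The delicate point is to check that an arbitrary exponent-$p$ group of class $\le c<p$ is a quotient of the mod-$p$ reduction of the free $\mathbb{Z}_{(p)}$-powered object, so that identities transfer. For this I would use that the free exponent-$p$ class-$c$ group is $F_{\mathbb{Z}_{(p)}}/F_{\mathbb{Z}_{(p)}}^{p}$, which requires regularity of $p$-groups of class $<p$ (P.~Hall) to identify the $p$-th power subgroup.
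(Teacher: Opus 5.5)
The paper does not prove this Fact (it is quoted from Khukhro), so the comparison is with the standard BCH argument, which your outline follows. The Lie-to-group half is sound: Step 1, the triangular inversion, $L_{G_L}=L$, and the descent to $\mathbb{F}_p$ through the torsion-free truncated free Lie ring over $\mathbb{Z}_{(p)}$.

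\textbf{The gap in the group-to-Lie half.} It sits at the sentence ``the free one is $G_{\mathcal{L}}$ by Step 1 together with the universal property''. The universal property of the free $\mathbb{Q}$-powered class-$c$ group $F$ on $X$ only yields a homomorphism $\phi\colon F\to G_{\mathcal{L}}$ fixing $X$, and Step 2's triangular argument makes it surjective. You want to transfer the identities verified in $G_{\mathcal{L}}$ to an arbitrary $G$: the Lie axioms for $h_1,h_2$, and the fact that $H$ computed from $h_1$, $h_2$ and $x^r$ returns the group product. For that you need $\phi$ to be \emph{injective}, i.e.\ no nontrivial $\mathbb{Q}$-group word in $X$ may become trivial in $G_{\mathcal{L}}$.

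Nothing in Step 1 gives this. Trying instead to show that $G_{\mathcal{L}}$ itself has the universal property is circular. You would define $G_{\mathcal{L}}\to G$ by evaluating in $G$ the words produced in Step 2, and well-definedness of that map is exactly injectivity of $\phi$. This injectivity is where the real work of the group-to-Lie direction lies, and it needs an extra input. Two standard options:
\begin{enumerate}
\item[(a)] Magnus's theorem. The abstract free class-$c$ group $N$ embeds, via $x\mapsto e^{x}$ (equivalently $x\mapsto 1+x$), into the unit group of the free associative $\mathbb{Q}$-algebra truncated in degrees $>c$, with kernel exactly $\gamma_{c+1}$. Composing with $\log$ embeds $N$ into $G_{\mathcal{L}}$. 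This passes to $F=\hat{N}$ by uniqueness of roots: if $\phi(g)=1$, some $g^n\in N$ has $\phi(g^n)=1$, so $g^n=1$ and hence $g=1$.
\item[(b)] A Hall--Witt count along the isolated lower central series $\sqrt{\gamma_k(F)}$. Its factors are $\mathbb{Q}$-spaces spanned by weight-$k$ commutators in $X$ satisfying the Lie relations. So they have dimension at most $\dim\mathcal{L}_k$ (the degree-$k$ component of $\mathcal{L}$), which forces $\phi$ to be bijective.
\end{enumerate}

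\textbf{The same gap in Step 3.} Presenting the free exponent-$p$ class-$c$ group as $F_{\mathbb{Z}_{(p)}}/F_{\mathbb{Z}_{(p)}}^{p}$ and comparing it with $G_{\mathcal{L}_{\mathbb{F}_p}}$ presupposes $F_{\mathbb{Z}_{(p)}}\cong G_{\mathcal{L}_{\mathbb{Z}_{(p)}}}$, which is the unproved injectivity again. Regularity of $p$-groups is beside the point. Once that isomorphism is available, the $p$-th powers are exactly $p\mathcal{L}_{\mathbb{Z}_{(p)}}$ (as $x^p=px$), which is already an ideal.

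Here the cheapest repair is counting. Let $B$ be the relatively free group of exponent $p$ and class $\le c$ on $d$ generators. The graded Lie ring $\bigoplus_k\gamma_k(B)/\gamma_{k+1}(B)$ is generated in degree $1$ and has elementary abelian components, so it is a quotient of the free $\mathbb{F}_p$-Lie algebra. Hence $|B|\le|\mathcal{L}_{\mathbb{F}_p}|$, where $\mathcal{L}_{\mathbb{F}_p}$ is the free class-$c$ Lie algebra on $d$ generators, and the surjection $B\to G_{\mathcal{L}_{\mathbb{F}_p}}$ is a bijection.

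\textbf{A minor point.} The step you flag as the main obstacle is easy. Integrality follows directly from Dynkin's form of BCH, where every degree-$n$ coefficient has denominator a product of integers $\le n$. No Goldberg-type valuation estimates are needed.
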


The following summarizes the key facts that we need about the Lazard correspondence.

\begin{fact} \label{Lazard facts} \cite[Chapter 10]{Khu98}
    Suppose $c < p$ for an odd prime $p$. Suppose that $L$ is a Lie algebra over $\mathbb{F}_{p}$ of nilpotence class $\leq c$, that $G$ is a group of nilpotence class $\leq c$ of exponent $p$, and that $L$ and $G$ are in correspondence with one another, i.e. $L = L_{G}$ as Lie algebras and $G = G_{L}$ as groups.
    \begin{enumerate}
        \item For all $a,b \in L$, 
        $$
        [a,b]_{L} = [a,b]_{G} \prod_{j} \chi_{j}^{s_{j}}
        $$
        where $s_{j} \in \mathbb{Z}_{(p)}$ and $\chi_{j}$ are group commutators in $a$ and $b$ of degree $\geq 3$.
        \item For all $a,b \in G$, 
        $$
        [a,b]_{G} = [a,b]_{L} + \sum_{j} u_{j} \chi_{j}
        $$
        where $u_{j} \in \mathbb{F}_{p}$ and the $\chi_{j}$ are Lie monomials in $a$ and $b$ of degree $\geq 3$. 
        \item A subset $K \subseteq G$ is a subgroup of $G$ if and only if $K \subseteq L$ is a Lie subalgebra. 
        \item A subset $I$ is a normal subgroup of $G$ if and only if $I$ is an ideal of $L$.
        \item A function from the underlying set $G = L$ to itself is an endomorphism of the group $G$ if and only if it is an endomorphism of the Lie algebra $L$. In particular, the automorphism groups $\mathrm{Aut}(L)$ and $\mathrm{Aut}(G)$ coincide as permutation groups. 
    \end{enumerate}
\end{fact}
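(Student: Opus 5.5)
We take the preceding Fact (the Lazard correspondence, with $L_{G_L}=L$ and $G_{L_G}=G$) as given. The plan is to derive every item from one principle: each structure's operations are given by \emph{terms} in the other structure's operations, built from finitely many iterated brackets or commutators with coefficients in $\mathbb{Z}_{(p)}$ (reduced mod $p$). Since $G$ and $L$ are nilpotent of class $\le c<p$, all these expressions are finite and well defined.

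\textbf{Items (1) and (2).} For (1), we use $[a,b]_L=h_2(a,b)$. The formula $h_2(a,b)=[a,b][a,a,b]^{1/2}[b,a,b]^{1/2}\cdots$ is, by construction, $[a,b]_G$ times a finite product of group commutators of weight $\ge 3$ in $a,b$, raised to powers in $\mathbb{Z}_{(p)}$.

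For (2), we compute the group commutator $a^{-1}b^{-1}ab$ in $G=G_L$ using the Baker--Campbell--Hausdorff operation $H$ and $a^{-1}=-a$. The standard expansion is
\[
H(H(-a,-b),H(a,b)) = [a,b]_L+(\text{Lie monomials of degree}\ge 3).
\]
Its coefficients are rational with denominators divisible only by primes at most $c<p$, so they reduce to elements of $\mathbb{F}_p$. The degree-$1$ terms cancel, and the degree-$2$ term is exactly $[a,b]_L$. The expansion is finite because $L$ is $c$-nilpotent.

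\textbf{Item (3).} Suppose $K$ is a Lie subalgebra. Then $K$ is closed under $H$, since $H$ is a Lie polynomial. It is also closed under inversion, since $a^{-1}=-a$. So $K$ is a subgroup.

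Conversely, suppose $K$ is a subgroup. Then $K$ is closed under $h_1$ and $h_2$, which are group words with $\mathbb{Z}_{(p)}$-exponents. It is also closed under scalar multiplication, since $ra=a^r$ and $K$ is closed under powers. So $K$ is a subalgebra.

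\textbf{Item (4).} Suppose $I$ is an ideal. By (3), $I$ is a subgroup. For normality, the conjugate $g^{-1}hg$ equals $e^{-\mathrm{ad}\,g}h$, which is a finite sum $h+\sum_k \tfrac{(-1)^k}{k!}(\mathrm{ad}\,g)^k h$. Each term with $k\ge 1$ lies in $I$, so $g^{-1}hg\in I$.

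Conversely, suppose $I$ is a normal subgroup. By (3), $I$ is a subalgebra. Take $h\in I$ and $g\in G$. By (1), $[h,g]_L$ is a product of group commutators each involving $h$ at least once, raised to $\mathbb{Z}_{(p)}$-powers. Each such commutator lies in $I$ by normality, so $[h,g]_L\in I$ and $I$ is an ideal.

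\textbf{Item (5).} A group endomorphism preserves every group word, and it preserves powers $a^r$ for $r\in\mathbb{F}_p$. Hence it preserves $h_1$, $h_2$ and scalar multiplication, so it is a Lie endomorphism. Symmetrically, a Lie endomorphism preserves $H$, so it is a group endomorphism. Applying this to bijections gives $\mathrm{Aut}(L)=\mathrm{Aut}(G)$ as permutation groups.

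\textbf{Main obstacle.} The real content lies in the correspondence itself: that $h_1,h_2$ and $H$ are mutually inverse and that all denominators avoid $p$ when $c<p$. We treat this as given by the previous Fact, following \cite[Chapter 10]{Khu98}. Within the items above, the only genuinely computational step is the degree-$2$ part of the BCH expansion of the commutator in (2); we would check it first by expanding $H$ up to degree $2$.
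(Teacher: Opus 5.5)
Your proposal is correct. The paper gives no argument of its own beyond the citation \cite[Chapter 10]{Khu98}, and your derivation is essentially the standard route taken there: you read every item off the mutual expressibility of the two structures' operations via the Baker--Campbell--Hausdorff and inverse formulas with $\mathbb{Z}_{(p)}$-coefficients, together with the vanishing of commutators or Lie monomials in which $h$ does not occur. The inputs you leave implicit are exactly what that reference supplies: that the degree-$\leq c$ BCH terms have only primes $\leq c$ in their denominators, and that formal identities such as $e^{-X}e^{Y}e^{X}=\exp(e^{-\mathrm{ad}\,X}Y)$ descend from $\mathbb{Q}$ to $\mathbb{F}_p$ in class $c<p$. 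In (4) you could avoid the second input entirely by writing $g^{-1}hg=h[h,g]_G$ and applying (2).
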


Note that Fact \ref{Lazard facts} implies if $L$ is a Lie algebra over $\mathbb{F}_{p}$ or $\mathbb{Q}$ of nilpotence class $\leq c$, the group $G$ is of nilpotency class $\leq c$ and is of exponent $p$ or is $\mathbb{Q}$-powered, and $L$ and $G$ are in correspondence with one another, then a sequence $(H_{i})_{1 \leq i \leq c+1}$ is a Lazard series for $G$ if and only if it is a Lazard series for $L$.  

\begin{lem}
    The class $\mathbb{K}^{P}_{c,\mathbb{Q}}$ of finitely generated $c$-nilpotent $\mathbb{Q}$-powered groups, with a named Lazard series, is a Fra\"iss\'e class. 
\end{lem}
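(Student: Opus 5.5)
We transfer the question to Lie algebras over $\mathbb{Q}$ using the Malcev correspondence, and then appeal to the free amalgamation of $c$-nilpotent LLAs from \cite{DINCI}, which holds over an arbitrary field and was recovered in the previous section as $A\otimes_C B \cong (\overline{L}/Q)/P_{c+1}$. The functor $G\mapsto L_G$ works as follows. It sends $\mathbb{Q}$-powered groups of class $\le c$ to $c$-nilpotent Lie algebras over $\mathbb{Q}$ with the same underlying set. The analogues of Fact \ref{Lazard facts}(3)--(5) for the Malcev correspondence, together with the remark after that fact, give three properties. First, substructures in the language of $\mathbb{Q}$-powered groups, namely subgroups closed under all $\sqrt[n]{-}$, are exactly the $\mathbb{Q}$-subspaces that are Lie subalgebras. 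Second, homomorphisms of $\mathbb{Q}$-powered groups are exactly Lie algebra homomorphisms. Third, Lazard series coincide. Hence the correspondence is an isomorphism of categories between $\mathbb{K}^P_{c,\mathbb{Q}}$ with embeddings and $c$-LLAs over $\mathbb{Q}$ with embeddings. Under it, finitely generated $\mathbb{Q}$-powered groups correspond to finitely generated Lie algebras. A finitely generated nilpotent Lie algebra is finite dimensional, being spanned by the finitely many Lie monomials of degree $\le c$ in the generators. Conversely, finite dimensional Lie algebras are finitely generated in the $\mathbb{Q}$-powered language.

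It then suffices to check the Fraïssé axioms for finite-dimensional $c$-LLAs over $\mathbb{Q}$. Hereditary property: a finitely generated substructure of a finite-dimensional algebra is finite dimensional, with the induced series $P_i\cap M$. Joint embedding property: this follows from amalgamation over the zero algebra, which is a common substructure in the language with constants; alternatively we amalgamate over the subalgebra generated by $\emptyset$. Countably many isomorphism types: a finite-dimensional $c$-LLA over $\mathbb{Q}$ is determined up to isomorphism by its dimension, the rational structure constants with respect to a Malcev basis, and the levels of the basis elements. This is a countable amount of data.

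The main step is amalgamation. Given $C\subseteq A,B$ finite dimensional, we form $A\otimes_C B$ as in the previous section. By that section's lemma, $A$ and $B$ embed as LLAs with $A\cap B=C$. It remains to check that $A\otimes_C B$ is finitely generated, i.e.\ finite dimensional. It is generated by the finite Malcev bases of $A$ and $B$. Since every letter has level $\ge1$, only regular special words of length $\le c$ survive in the quotient by $P_{c+1}$, and there are finitely many of these. We then transport everything back along the Malcev correspondence. The only point needing care is to confirm that an LLA embedding of Lie algebras yields an embedding of $\mathbb{Q}$-powered groups that preserves the predicates, which is immediate since the underlying maps coincide. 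I expect the main obstacle to be this bookkeeping about the language: making sure that the Malcev analogues of Fact \ref{Lazard facts} are available over $\mathbb{Q}$ (cited from \cite[Chapter 10]{Khu98}), and that "finitely generated" in the $\mathbb{Q}$-powered language matches finite dimensionality.
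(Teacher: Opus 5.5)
Your proposal is correct and follows the paper's route. The paper's proof is one line: the result follows as in \cite[Corollary 4.39]{DINCI} from the quantifier-free bi-definability (Malcev correspondence) of $c$-nilpotent $\mathbb{Q}$-powered groups with $c$-LLAs over $\mathbb{Q}$; you have simply spelled out HP, JEP, countability and AP on the Lie side, with amalgamation given by the Shirshov quotient $(\overline{L}/Q)/P_{c+1}$ and its finite dimensionality checked. One convention is implicit in your argument and equally in the paper's: the named Lazard series must consist of $\mathbb{Q}$-powered subgroups (equivalently $\mathbb{Q}$-subalgebras), and this is what makes ``Lazard series coincide'' true and keeps the number of isomorphism types countable.
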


\begin{proof}
    This follows as in \cite[Corollary 4.39]{DINCI}, using the fact that each $G \in \mathbb{K}_{c,\mathbb{Q}}$ is quantifier-free bi-definable with a Lazard Lie algebra over $\mathbb{Q}$. 
\end{proof}

If $H$ is a subgroup of a group $G$, then we define $\sqrt{H}$ to be $\{g \in G : g^{n} \in H \text{ for some }n\}$. 

\begin{fact} \label{fact: malcev completion}
    Suppose $G$ is a nilpotent group. 
    \begin{enumerate}
        \item If $H \leq G$, then $\sqrt{H}$ is a subgroup. If $A,B \leq G$ and $A \unlhd B$, then $\sqrt{A} \unlhd \sqrt{B}$. \cite[Theorem 9.18]{Khu98}
        \item If $G$ is $\mathbb{Q}$-powered and $H \leq G$ is an abstract subgroup, then $\sqrt{H}$ is the $\mathbb{Q}$-powered subgroup of $G$ generated by $H$; conversely if $H \leq G$ is a $\mathbb{Q}$-powered subgroup generated by $X$, then $H = \sqrt{K}$ for $K$ the abstract subgroup generated by $X$. \cite[Corollary 9.19]{Khu98}
        \item If $G$ is torsion-free, then $G$ can embedded into a $\mathbb{Q}$-powered group $\hat{G}$ of the same nilpotency class such that $\sqrt{G} = \hat{G}$. The group $\hat{G}$ is called the \emph{Malcev completion} of $G$. It is unique up to isomorphism and every isomorphism $G \to G'$ extends to an isomorphism $\hat{G} \to \hat{G'}$. \cite[Theorem 9.20]{Khu98}
    \end{enumerate}
\end{fact}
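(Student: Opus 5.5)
Fact \ref{fact: malcev completion} is a compilation of three results from \cite{Khu98}, so the plan is to recall the standard arguments in order and treat the cited theorems as the backbone.

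For (1), we show that $\sqrt{H}$ is a subgroup by induction on the nilpotency class $c$ of $G$. The key lemma is the classical one: in a nilpotent group of class $c$, if $x^{n}$ and $y^{m}$ lie in $H$, then some power $(xy)^{N}$ lies in $H$. One route is the Hall--Petrescu collection formula, which writes $x^{N}y^{N}$ as $(xy)^{N}$ times products of commutators raised to binomial-coefficient powers. Inducting on class, modulo the last term of the lower central series, and using that the relevant commutators become roots of elements of $H$, gives closure under products. Closure under inverses is immediate. For the normality statement, take $b \in \sqrt{B}$ with $b^{m} \in B$ and $a \in \sqrt{A}$. We need some power of $b^{-1}ab$ to lie in $A$; since $(b^{-1}ab)^{n} = b^{-1}a^{n}b$, it suffices to show that conjugation by $b$ preserves $\sqrt{A}$. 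Because $b^{m}$ normalizes $A$, and hence normalizes $\sqrt{A}$, we reduce to the lemma that if $b^{m}$ normalizes a root-closed subgroup then so does $b$. This is proved again by induction on class, using that $[\sqrt{A},b]$ has power-closed image modulo suitable central terms.

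For (2), the argument is direct. In a $\mathbb{Q}$-powered group, $\sqrt{H}$ is a subgroup by (1) and is closed under all roots, since $g^{1/k}$ has a power in $H$ whenever $g$ does. It is therefore a $\mathbb{Q}$-powered subgroup containing $H$, and any $\mathbb{Q}$-powered subgroup containing $H$ must contain every root of elements of $H$. The converse direction follows by applying this to $K = \langle X\rangle$.

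For (3), we would pass through the Malcev correspondence rather than do group-theoretic constructions. First reduce to finitely generated torsion-free nilpotent $G$ and take a Malcev basis, i.e. a central series with infinite cyclic factors. Polynomial multiplication in Malcev coordinates then makes sense over $\mathbb{Q}$, which defines $\hat{G}$; alternatively, embed $G$ into unitriangular matrices $UT_{n}(\mathbb{Z}) \le UT_{n}(\mathbb{Q})$ and take $\sqrt{G}$ inside. Uniqueness comes from showing that every homomorphism $G \to M$ into a $\mathbb{Q}$-powered group extends uniquely to $\hat{G}$, using that the roots $g^{1/n}$ are forced by uniqueness of roots. The general case follows by a direct limit over finitely generated subgroups.

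The main obstacle is the collection/induction step in (1). Everything else reduces to it or to the existence of a torsion-free $\mathbb{Q}$-powered overgroup. Since the paper only cites these facts, it would suffice to invoke \cite[Theorems 9.18, 9.20 and Corollary 9.19]{Khu98}.
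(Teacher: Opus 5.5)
Your proposal matches the paper, which gives no argument for this Fact beyond citing Khukhro \cite{Khu98} (Theorem 9.18, Corollary 9.19, Theorem 9.20); your closing appeal to those results is exactly the paper's proof. If you want your sketches to stand on their own, two points need repair:
\begin{enumerate}
\item In (1), the induction on class must be run inside $K=\langle x,y\rangle$ with the subgroup $H\cap K$. Working modulo $\gamma_c(G)$ only gives $(xy)^N=hz$ with $h \in H$ and $z\in\gamma_c(G)$, and the induction says nothing about $z$. Inside $K$, by contrast, $\gamma_c(K)$ is generated by the central commutators $[u_1,\ldots,u_c]$ with $u_i\in\{x,y\}$. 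Multilinearity then gives $[u_1,\ldots,u_c]^{(nm)^c}=[u_1^{nm},\ldots,u_c^{nm}]\in H$, so some power of $z$ lies in $H$.
\item In (3), uniqueness of roots only shows that $\hat\psi(g)=\psi(g^n)^{1/n}$ is the unique well-defined candidate extension. It does not show that $\hat\psi$ is a homomorphism. For nilpotent $M$ (the only case you need) this follows because the graph of $\hat\psi$ is the isolator in $\hat G\times M$ of the graph of $\psi$, hence a subgroup by (1). Likewise, the claim that $G$ and $\sqrt G$ have the same nilpotency class needs its own short induction, starting from $Z(G)\le Z(\sqrt G)$.
\end{enumerate}
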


\begin{prop}
    The classes $\mathbb{K}_{c,p}$, $\mathbb{K}_{c,\mathrm{tf}}$, and $\mathbb{K}_{c,\mathbb{Q}}$ have the cofinal amalgamation property.
\end{prop}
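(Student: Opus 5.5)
For each class, the plan is to take as the subclass $\mathscr{C}'$ the UL-equivalent members of the class, meaning $\gamma_i(G)=Z_{c+1-i}(G)$ for all $i$. Two things then have to be checked: every member embeds into a UL-equivalent member, and UL-equivalent members form a class with the amalgamation property.

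\textbf{Exponent $p$.} Let $G\in\mathbb{K}_{c,p}$ and let $A=L_G$, a finite $c$-nilpotent Lie algebra over $\mathbb{F}_p$. Apply Theorem \ref{thm:mainliealgebra} to get the finite-dimensional algebra $B=\langle x\rangle\otimes_0 A$, which satisfies $\gamma_i(B)=Z_{c+1-i}(B)$; if $G$ is trivial, replace $A$ by a $1$-dimensional algebra first. The lower and upper central series are defined from the bracket, commutators correspond by Fact \ref{Lazard facts}(1),(2), and normal subgroups correspond to ideals by Fact \ref{Lazard facts}(4). So a routine induction shows that $\gamma_i$ and $Z_i$ are the same sets in $G_B$ and in $B$. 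Hence $G_B$ is a finite UL-equivalent group of exponent $p$ and class $c$, and $G\le G_B$ by Fact \ref{Lazard facts}(3).

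For amalgamation, let $C\to A$ and $C\to B$ be embeddings with $A,B,C$ UL-equivalent. Equip each group with the Lazard series $P_i=\gamma_i$. By Lemma \ref{lm:lazardUL} this is the only Lazard series on a UL-equivalent group, since any Lazard series satisfies $\gamma_i\subseteq P_i\subseteq Z_{c+1-i}$. The embeddings therefore become embeddings in $\mathbb{K}^P_{c,p}$, because the pullback of a Lazard series along an embedding is again a Lazard series on $C$, hence equals $\gamma_i(C)$. Form the free amalgam in $\mathbb{K}^P_{c,p}$ using \cite{DINCI}, or the previous section via the Lazard correspondence; it is finite because it is a quotient of a finitely generated $c$-nilpotent algebra over a finite field. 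Then embed the amalgam into a UL-equivalent group by the first step. This shows UL-equivalent groups are amalgamation bases within the subclass, which gives the cofinal amalgamation property.

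\textbf{$\mathbb{Q}$-powered groups.} Run the same argument over $\mathbb{Q}$ using the Malcev correspondence and the Fraïssé class $\mathbb{K}^P_{c,\mathbb{Q}}$. Finite generation is preserved: $B$ is finite dimensional, and the amalgam of finitely generated objects is finitely generated. One point needs checking: finitely generated $\mathbb{Q}$-powered groups correspond to finite-dimensional algebras, and a finitely generated nilpotent $\mathbb{Q}$-Lie algebra is finite dimensional.

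\textbf{Torsion-free groups.} Take $\mathscr{C}'$ to be the finitely generated torsion-free $G$ whose Malcev completion $\hat G$ is UL-equivalent, and which are large enough in their completion. Given $G$, embed $\hat G\le H$ with $H$ UL-equivalent and $\mathbb{Q}$-powered, and choose a finitely generated $G'\le H$ containing $G$ with $\sqrt{G'}=H$, using Fact \ref{fact: malcev completion}(2),(3). To amalgamate $A\leftarrow C\to B$ in $\mathscr{C}'$, pass to completions; the maps extend by Fact \ref{fact: malcev completion}(3) and remain injective. Amalgamate the completions in $\mathbb{K}_{c,\mathbb{Q}}$ and take the subgroup generated by the images of $A$ and $B$.

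\textbf{Main obstacle.} I expect this last step to be the hardest. First, the amalgam must be finitely generated and torsion-free, which holds because it is a finitely generated subgroup of a $\mathbb{Q}$-powered group. Second, the amalgam must be re-embeddable into $\mathscr{C}'$, which is handled by the first step. Third, and most delicate, $\mathscr{C}'$ itself must have the amalgamation property rather than merely being cofinal. For this I would define $\mathscr{C}'$ as all finitely generated torsion-free groups with UL-equivalent completion, and observe that the amalgam lies in $\mathscr{C}'$ once its completion is replaced by a UL-equivalent extension.
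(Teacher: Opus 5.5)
Your proposal is correct and follows essentially the paper's route: embed into a UL-equivalent object via Theorem~\ref{thm:mainliealgebra} and the Lazard/Malcev correspondence, use uniqueness of the Lazard series (Lemma~\ref{lm:lazardUL}) so the given embeddings become embeddings of Lazard groups, amalgamate there, and in the torsion-free case pass to Malcev completions and take a finitely generated subgroup. The differences are minor: you also re-embed the amalgam so that it lands back in the subclass, matching the literal wording of Definition~\ref{def:APs}(2), whereas the paper only shows the UL-objects are amalgamation bases for the whole class; and your pullback argument, the trivial-group case, and the finite-generation checks spell out steps the paper leaves implicit.
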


\begin{proof}
    We showed in Theorem \ref{thm:mainliealgebra} that if $L_{0}$ is a finite dimensional $c$-nilpotent Lie algebra over a field $K$, then there is some finite dimensional $L_{1} \supseteq L_{0}$ which is also finite dimensional and $c$-nilpotent and which is additionally a UL-Lie algebra. By Lazard correspondence, if $G_{0} \in \mathbb{K}_{c,p}$, then it follows there is some UL-group $G_{1} \in \mathbb{K}_{c,p}$ with $G_{0} \subseteq G_{1}$. Likewise, by the Malcev correspondence, if $G_{0} \in \mathbb{K}_{c,\mathbb{Q}}$ there is some UL-group $G_{1} \in \mathbb{K}_{c,\mathbb{Q}}$. A UL-group has a unique Lazard series, so given an amalgamation problem with base $G_{1}$ in either case, 
    $$
    \xymatrix{
A && B \\
& G_{1} \ar[ul]^{\alpha} \ar[ur]^{\beta} &
}
    $$
    then, giving $A$ and $B$ the structure of a Lazard group arbitrarily (by, e.g., naming the lower central series), the maps $\alpha$ and $\beta$ necessarily respect the Lazard predicates and, therefore, there is an amalgam. 

    Now we handle the case of $\mathbb{K}_{c,\mathrm{tf}}$.  Let $A \in \mathbb{K}_{c,\mathrm{tf}}$ and let $\hat{A}$ denote the Malcev completion of $A$. By Fact \ref{fact: malcev completion}(3), $\hat{A} \subseteq \hat{B}$ where $\hat{B}$ is a $c$-nilpotent $\mathbb{Q}$-powered group which is a UL-group that is finitely generated as a $\mathbb{Q}$-powered group. Let $X$ be a finite generating set of $\hat{B}$ (as a $\mathbb{Q}$-powered group) which contains generators for $A$ (as an abstract subgroup). Let $B := \langle X \rangle$ be the abstract subgroup of $\hat{B}$ generated by $X$ (our notation is sensible since $\sqrt{B} = \hat{B}$). 

    We will argue that $B$ is an amalgamation base for $\mathbb{K}_{c,\mathrm{tf}}$. Let $C,D \in \mathbb{K}_{c,\mathrm{tf}}$ and suppose we are given embeddings $\alpha : B \to C$ and $\beta: B \to D$. By Fact \ref{fact: malcev completion}(3), these embeddings lift to embeddings $\hat{\alpha} : \hat{B} \to \hat{C}$ and $\hat{\beta}: \hat{B} \to \hat{D}$. Let $\hat{E}$ be an amalgam for $\hat{C}$ and $\hat{D}$ over $\hat{B}$. Let $E$ be an abstract subgroup of $\hat{E}$ containing a generating set for $\hat{E}$ (as a $\mathbb{Q}$-powered group), as well as the images of $C$ and $D$ in $\hat{E}$. Then $E$ is torsion-free, $c$-nilpotent, and amalgamates $C$ and $D$ over $B$.  
\end{proof}

\subsection{Characterizing amalgamation bases}

In this subsection, we characterize amalgamation bases for the classes $\mathbb{K}_{c,p}$, $\mathbb{K}_{c,\mathbb{Q}}$, and $\mathbb{K}_{c,\mathrm{tf}}$. 

\begin{fact} \label{fact: commutator of center}
    In any group $G$, for all $i,j$, $[\gamma_{i}(G),Z_{j}(G)] \leq Z_{j-i}(G)$ (where, by convention $Z_{k}(G) =1$ for all $k \leq 0$). 
\end{fact}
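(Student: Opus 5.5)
We prove $[\gamma_i(G),Z_j(G)]\le Z_{j-i}(G)$ by induction on $i$, with $j$ arbitrary. The tools are the defining property of the upper central series, $[G,Z_k(G)]\le Z_{k-1}(G)$ for $k\ge 1$, and the three subgroup lemma applied to normal subgroups.

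The base case $i=1$ is $[G,Z_j(G)]\le Z_{j-1}(G)$. This holds by definition of $Z_j(G)$ as the preimage of $Z(G/Z_{j-1}(G))$ when $j\ge 1$. When $j\le 0$ we have $Z_j(G)=1$, so the commutator is trivial.

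For the inductive step, assume $[\gamma_i(G),Z_k(G)]\le Z_{k-i}(G)$ for all $k$, and consider $[\gamma_{i+1}(G),Z_j(G)] = [[\gamma_i(G),G],Z_j(G)]$. Set $N=Z_{j-i-1}(G)$; it is normal, as are $\gamma_i(G)$, $G$ and $Z_j(G)$. We bound the two other commutators in the three subgroup lemma:
\[
[[G,Z_j(G)],\gamma_i(G)] \le [Z_{j-1}(G),\gamma_i(G)] \le Z_{j-1-i}(G)=N
\]
by the base case and then the inductive hypothesis with $k=j-1$, and
\[
[[Z_j(G),\gamma_i(G)],G] \le [Z_{j-i}(G),G] \le Z_{j-i-1}(G)=N
\]
by the inductive hypothesis and then the base case. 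The three subgroup lemma, in its form "if two of the three commutators lie in a normal subgroup $N$, so does the third", then gives $[[\gamma_i(G),G],Z_j(G)]\le N$. This is the claim for $i+1$.

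The only point needing care is bookkeeping with indices: the convention $Z_k(G)=1$ for $k\le 0$ must be used consistently, so that each containment remains valid (trivially) when an index drops to zero or below. Commutator subgroups of normal subgroups are symmetric, so the order of arguments in each bracket does not matter. With this, the induction goes through with no real obstacle.
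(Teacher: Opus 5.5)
Your proof is correct and follows essentially the same route as the paper: induction on $i$, simultaneously for all $j$, with the three subgroup lemma reducing $[[\gamma_i(G),G],Z_j(G)]$ to its two cyclic permutations, each of which is bounded by $Z_{j-i-1}(G)$ using the base case and the inductive hypothesis. The only cosmetic difference is that you use the ``normal subgroup $N$'' form of the three subgroup lemma, whereas the paper uses the product form $[A,B,C]\le[B,C,A][C,A,B]$; since all the subgroups involved are normal, these amount to the same thing.
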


\begin{proof}
    The proof is an application of the `Three Subgroups Lemma' \cite[Fact 0.3]{dixon2003analytic}  which states that, for all $A,B,C \leq G$,
    $$
    [A,B,C] \leq [B,C,A][C,A,B]. 
    $$
    The proof argues simultaneously for all $j$, by induction on $i$. Since $\gamma_{1}(G) = G$, the inclusion $[\gamma_{1}(G),Z_{j}(G)] \leq Z_{j-1}(G)$ follows from the definition of $Z_{j}(G)$. Assuming the statement has been established for $i$, we check 
    \begin{eqnarray*}
        [\gamma_{i+1}(G),Z_{j}(G)] = [G,\gamma_{i}(G),Z_{j}(G)] &\leq& [\gamma_{i}(G),Z_{j}(G),G][Z_{j}(G),G,\gamma_{i}(G)] \\
        &\leq& [Z_{j-i}(G),G][Z_{j-1}(G),\gamma_{i}(G)] \\
        &\leq& Z_{j-i-1}(G) Z_{j-i-1}(G) \\
        &=& Z_{j-i-1}(G),
    \end{eqnarray*}
    establishing the claim.
\end{proof}

\begin{defn}
    Suppose $G$ is a $c$-nilpotent group. Say $G$ is \emph{Lazard canonical} if there is a unique Lazard series for $G$ (which must necessarily be the lower central series).  
\end{defn}

\begin{lem}\label{lm:lazardUL}
    Suppose $G$ is a $c$-nilpotent group with Lazard series $(H_{n})_{n \geq 1}$ of length $c$.  Then, for all $1 \leq i \leq c+1$, 
$$
\gamma_{i}(G) \leq H_{i} \leq Z_{c+1-i}(G).
$$
\end{lem}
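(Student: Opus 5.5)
The plan is to prove the two inclusions separately, each by induction on $i$. Both will use only the defining property of a Lazard series of length $c$. Concretely, this means $H_1 = G$, $H_{c+1} = 1$, the $H_i$ are decreasing, and $[H_i,H_j] \leq H_{i+j}$, where $H_k = 1$ for $k > c$.

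For the lower inclusion $\gamma_i(G) \leq H_i$, I will induct upward on $i$. The base case $i=1$ holds because $\gamma_1(G) = G = H_1$. For the inductive step, assume $\gamma_i(G) \leq H_i$. Then
$$\gamma_{i+1}(G) = [\gamma_i(G),G] \leq [H_i,H_1] \leq H_{i+1},$$
using monotonicity of the commutator subgroup in each argument. The statement for $i = c+1$ is then automatic, since $H_{c+1} = 1$ and $G$ is $c$-nilpotent, so $\gamma_{c+1}(G) = 1$.

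For the upper inclusion $H_i \leq Z_{c+1-i}(G)$, I will induct downward on $i$, starting from $i = c+1$. There $H_{c+1} = 1 = Z_0(G)$. For the inductive step, suppose $H_{i+1} \leq Z_{c-i}(G)$ for some $i \geq 1$. Then
$$[H_i,G] = [H_i,H_1] \leq H_{i+1} \leq Z_{c-i}(G).$$
By the characterization of the upper central series, $Z_{k+1}(G)/Z_k(G) = Z(G/Z_k(G))$, so an element $h$ with $[h,G] \leq Z_k(G)$ lies in $Z_{k+1}(G)$. Applying this with $k = c-i$ gives $H_i \leq Z_{c-i+1}(G) = Z_{c+1-i}(G)$. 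At $i=1$ this yields $H_1 = G \leq Z_c(G)$, which is consistent with $G$ being $c$-nilpotent.

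There is no real obstacle here. The only care needed is in indexing: the upper inclusion must be run as a downward induction starting at $i=c+1$, and one must note that the length-$c$ hypothesis is what gives $H_{c+1}=1$, which is needed for the base case. Fact \ref{fact: commutator of center} is not needed for this lemma. I expect it to be used afterwards, in showing that UL-equivalent groups are Lazard canonical: when $\gamma_i(G) = Z_{c+1-i}(G)$, the sandwich above forces $H_i = \gamma_i(G)$.
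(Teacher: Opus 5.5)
Your proof is correct and matches the paper's argument essentially verbatim. For the lower inclusion you induct upward via $\gamma_{i+1}(G)=[\gamma_i(G),G]\le[H_i,H_1]\le H_{i+1}$, and for the upper inclusion you start from $H_{c+1}=1=Z_0(G)$ and use that $[H_i,G]\le H_{i+1}\le Z_{c-i}(G)$ forces $H_i\le Z_{c+1-i}(G)$. One side remark: the paper gets UL $\Rightarrow$ Lazard canonical from the sandwich alone, and uses the Fact on $[\gamma_i(G),Z_j(G)]$ only for the converse direction.
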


\begin{proof}
    By definition, we have $\gamma_{1}(G) = H_{1} = G$.  Assuming we have shown $\gamma_{n}(G) \leq H_{n}$ for some $n$, we have 
    $$
    H_{n+1} \geq [H_{n},H_{1}] \geq [\gamma_{n}(G),G] = \gamma_{n+1}(G)
    $$
    which establishes the first inequality. 

    For the next direction, we argue by induction on $i$ that $H_{c+1-i} \leq Z_{i}(G)$.  As $G$ is $c$-nilpotent, we must have $H_{c+1} = 1 = Z_{0}(G)$, which handles the base case.  Assuming for $i$, we pick any $g \in G$ and $h \in H_{c+1 - (i+1)}$.  Then by the inductive hypothesis, we have 
    $$
    [g,h] \in H_{c+1-i} \leq Z_{i}(G)
    $$
    so $ghZ_{i}(G) = hgZ_{i}(G)$ which shows $h \in Z_{i+1}(G)$. 
\end{proof}

\begin{prop} \label{prop: UL = LC}
    Suppose $G$ is a $c$-nilpotent group. The following are equivalent:
    \begin{enumerate}
        \item $G$ is a UL-group.
        \item $G$ is Lazard canonical.
    \end{enumerate}
\end{prop}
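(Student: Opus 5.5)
The plan is to prove both implications using Lemma \ref{lm:lazardUL}, together with the fact that the upper central series itself is a Lazard series, which Fact \ref{fact: commutator of center} supplies.

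For (1)$\Rightarrow$(2): suppose $\gamma_i(G)=Z_{c+1-i}(G)$ for all $1\leq i\leq c+1$, and let $(H_n)$ be any Lazard series of length $c$. Lemma \ref{lm:lazardUL} gives $\gamma_i(G)\leq H_i\leq Z_{c+1-i}(G)=\gamma_i(G)$, so $H_i=\gamma_i(G)$ for every $i$. Hence any Lazard series equals the lower central series. The lower central series is itself a Lazard series, since $[\gamma_i(G),\gamma_j(G)]\leq\gamma_{i+j}(G)$ is standard. So the Lazard series exists and is unique, i.e. $G$ is Lazard canonical.

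For (2)$\Rightarrow$(1): I will show that $K_i:=Z_{c+1-i}(G)$ is a Lazard series. Then uniqueness forces $K_i=\gamma_i(G)$ for all $i$, which is the UL condition. The endpoints are fine: $K_1=Z_c(G)=G$ because $G$ is $c$-nilpotent, and $K_{c+1}=Z_0(G)=1$; the chain is descending. The substantive point is the Lazard condition
\[[Z_{c+1-i}(G),Z_{c+1-j}(G)]\leq Z_{c+1-i-j}(G),\]
where the right-hand side is trivial when $i+j>c$. Applying Fact \ref{fact: commutator of center} with $\gamma_i$ gives only $[\gamma_i(G),Z_{c+1-j}(G)]\leq Z_{c+1-i-j}(G)$, so I need to handle the commutator of two upper central terms directly.

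That is the main obstacle, and I would prove the required containment by induction on $i$ using the Three Subgroups Lemma. When $i=1$ it is just the definition of the upper central series. For the step, the natural approach is to use $Z_k$ in place of $\gamma$, via the dual inequality $[Z_a(G),Z_b(G)]\leq Z_{a+b-c}(G)$ for a group of class $c$. To establish this dual inequality I would use the characterization $g\in Z_k(G)$ iff $[g,x_1,\dots,x_k]=1$ for all $x_1,\dots,x_k\in G$, i.e. iff $[g,\gamma\text{-type products}]$ lands appropriately, and combine it with Fact \ref{fact: commutator of center}. Concretely, I want to show $[Z_a,Z_b,G,\dots,G]=1$ with $a+b-c$ copies of $G$. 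Write $[Z_a,Z_b]$ commutated with $\gamma_{m}(G)$, where $m=a+b-c$, expand by three subgroups, and use $[Z_b,\gamma_m]\leq Z_{b-m}=Z_{c-a}$ and $[\gamma_m,Z_a]\leq Z_{a-m}=Z_{c-b}$. Since $Z_{c-a}$ commutes appropriately with $Z_a$ by the induction (note $(c-a)+a=c$, so we need $[Z_a,Z_{c-a}]=1$), this would finish the argument.

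A caveat: $[Z_a,Z_{c-a}]=1$ can fail in general groups. If this direct route stalls, I would fall back on a cleaner reformulation: show $K_i$ is Lazard by verifying $[K_i,K_j]\leq K_{i+j}$ via induction on $j$ with $i$ fixed, using three subgroups together with $K_{j+1}=\{g:[g,G]\leq K_j\}$.
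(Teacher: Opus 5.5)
\textbf{Verdict: (1)$\Rightarrow$(2) is correct; (2)$\Rightarrow$(1) rests on a false lemma.} Your direction (1)$\Rightarrow$(2) is exactly the paper's argument. The direction (2)$\Rightarrow$(1) fails because the reindexed upper central series $K_i=Z_{c+1-i}(G)$ is \emph{not} a Lazard series in general.

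Take $c=3$ and $G=H\times K$, with $H$ a non-abelian group of class $2$ (e.g.\ a Heisenberg group) and $K$ any group of class exactly $3$. Then $K_2=Z_2(G)=H\times Z_2(K)\supseteq H$. The Lazard condition demands $[K_2,K_2]\le K_4=Z_0(G)=1$, which fails since $[H,H]\neq 1$.

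So neither your three-subgroups induction nor your fallback induction on $j$ can succeed, since both target this false statement. Some further slips:
\begin{itemize}
\item In terms of $a=c+1-i$, $b=c+1-j$, the bound you need is $[Z_a,Z_b]\le Z_{a+b-c-1}$. Your ``dual inequality'' $Z_{a+b-c}$ is one index too weak, and it is itself false: take $c=4$, $a=b=2$, and the same kind of product with $K$ of class $4$.
\item The recursion in your fallback should read $K_{j-1}=\{g:[g,G]\le K_j\}$, not $K_{j+1}$.
\end{itemize}
The structural problem is that your plan never uses hypothesis (2) before the last line. For a group that is not UL, there is no reason for the whole upper central series to satisfy the Lazard condition.

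\textbf{The missing idea.} You need a way to build a \emph{second} Lazard series from a single failure of the UL property, without asking the full upper central series to cooperate. The paper argues contrapositively.
\begin{itemize}
\item Choose $k$ maximal with $Z_{c+1-k}(G)\not\subseteq\gamma_k(G)$, and pick $a$ in the difference. Maximality gives $Z_{c+1-m}(G)=\gamma_m(G)$ for all $m>k$.
\item Fact~\ref{fact: commutator of center} then yields $[a,\gamma_i(G)]\le Z_{c+1-k-i}(G)=\gamma_{k+i}(G)$.
\item Set $P_i=\gamma_i(G)\langle a\rangle$ for $i\le k$ and $P_i=\gamma_i(G)$ for $i>k$. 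All generating commutators of $[P_i,P_j]$ land in the normal subgroup $\gamma_{i+j}(G)$, so this is a Lazard series.
\item It differs from the lower central series at index $k$, contradicting Lazard canonicity.
\end{itemize}
Adjoining just one element, at the largest index where the UL property fails, is what makes the commutator bookkeeping close up.
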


\begin{proof}
    (1)$\implies$(2) is immediate from Lemma \ref{lm:lazardUL}. 

    (2)$\implies$(1) Assume $G$ is not a UL-group. We will show there is a Lazard series for $G$ which is not the lower central series.  Let $k$ be maximal such that $Z_{c+1-k}(G) \setminus \gamma_{k}(G) \neq \emptyset$ and choose some $a \in Z_{c+1-k}(G) \setminus \gamma_{k}(G)$.  Note that, since $a \in Z_{c+1-k}(G)$, for all $i = 1, \ldots, c$, we have, by Fact \ref{fact: commutator of center} and the maximality of $k$, if $g \in \gamma_{i}(G)$, then $[a,g] \in [Z_{c+1}(G),\gamma_{i}(G)] \leq Z_{c+1-k-i}(G) = \gamma_{k+i}(G)$. 

    Define a series $(P_{i})_{i = 1}^{c+1}$ by setting $P_{i}(G) = \langle \gamma_{i}(G),a \rangle$ for all $i \leq k$ and $P_{i}(G) = \gamma_{i}(G)$ for all $i > k$. 

    Now fix $1 \leq i,j \leq c$. We have to show $[P_{i}(G),P_{j}(G)] \leq P_{i+j}(G)$ (where, by convention, $P_{i+j}(G) = 1$ when $i+j > c$).  We have three cases. First, if both $i$ and $j$ are greater than $k$, this is immediate from the fact that the lower central series is a Lazard series. Secondly, if $i,j \leq k$, then it follows from the commutator identity $[xy,z] = [x,z]^{y}[y,z]$ (\cite[Fact 0.2]{dixon2003analytic}) that any normal subgroup containing $[a,\gamma_{j}(G)]$, $[a,\gamma_{i}(G)]$, and $[\gamma_{i}(G),\gamma_{j}(G)]$ will contain $[P_{i}(G),P_{j}(G)]$. By the argument above using Fact \ref{fact: commutator of center}, it follows that $[a,\gamma_{i}(G)]$ and $[a,\gamma_{j}(G)]$ are contained in $\gamma_{k+i}(G)$ and $\gamma_{j+k}(G)$, respectively, which are both contained in $\gamma_{i+j}(G)$. Thus we have shown that $[P_{i}(G),P_{j}(G)] \leq \gamma_{i+j}(G) \leq P_{i+j}(G)$, as desired. The remaining case is when (without loss of generality) $i \leq k$ and $j > k$. But then, as in the previous case, we observe that $\gamma_{i+j}(G)$ is a normal subgroup containing $[a,\gamma_{j}(G)]$ and $[\gamma_{i}(G),\gamma_{j}(G)]$ and therefore containing $[P_{i}(G),P_{j}(G)]$, finishing the proof. 
\end{proof}

\begin{prop}
    Let $G$ be a finite $c$-nilpotent group of exponent $p$ ($c$-nilpotent finitely generated $\mathbb{Q}$-powered group). Then the following are equivalent:
    \begin{enumerate}
    \item $G$ is a UL-group. 
    \item $G$ is Lazard canonical. 
    \item $G$ is an amalgamation base for $\mathbb{K}_{c,p}$ (or $\mathbb{K}_{c,\mathbb{Q}}$).
    \end{enumerate}
\end{prop}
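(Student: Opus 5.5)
(1)$\Leftrightarrow$(2) is Proposition \ref{prop: UL = LC}, so the work is (2)$\Rightarrow$(3) and (3)$\Rightarrow$(2). Throughout I pass to Lie algebras via the Lazard correspondence (resp. the Malcev correspondence). By Fact \ref{Lazard facts} and the remark after it, embeddings, Lazard series, and amalgamation problems for groups of exponent $p$ (resp. finitely generated $\mathbb{Q}$-powered groups) translate faithfully into the corresponding notions for finite-dimensional $c$-nilpotent Lie algebras over $\mathbb{F}_p$ (resp. $\mathbb{Q}$).

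For (2)$\Rightarrow$(3) I repeat the argument of the cofinal amalgamation proposition. Take embeddings $\alpha: G\to A$ and $\beta: G\to B$ in the class. Name the lower central series on $A$ and $B$. Then $\alpha^{-1}(\gamma_i(A))$ is a Lazard series on $G$, so by uniqueness it equals $\gamma_i(G)$; likewise for $\beta$. Hence $\alpha$ and $\beta$ are embeddings in $\mathbb{K}^P$, and the free amalgam $A\otimes_G B$, which exists in the Lazard class, forgets to an amalgam in the group class.

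For (3)$\Rightarrow$(2) I argue contrapositively. Suppose $G$ has two distinct Lazard series $(P_i)$ and $(P'_i)$. I will try to use the free amalgam to produce two extensions that cannot be amalgamated. Let $A = \langle x\rangle\otimes_0 G$, with $G$ carrying $(P_i)$, and $B = \langle x\rangle \otimes_0 G$, with $G$ carrying $(P'_i)$. By Theorem \ref{thm:mainliealgebra}, suitably generalised to an arbitrary Lazard structure on the factor (its proof only used that $P_i(G)\subseteq$ a Malcev-compatible filtration), both $A$ and $B$ are UL. Hence every $c$-nilpotent $D$ containing $A$ induces on $A$, and so on $G$, exactly the series $(P_i)$ of $A$ restricted to $G$. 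More precisely, inside $A$ we have $\gamma_i(A)\cap G = P_i$, by the level description of the Shirshov basis and Lemma \ref{lem:malcev}(3).

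The crux is then this claim: if $D$ amalgamates $A$ and $B$ over $G$, then the series $Z_{c+1-i}(D)\cap G$ and $\gamma_i(D)\cap G$ are squeezed in a way that forces $P_i = P'_i$. I would first try to show that a UL subalgebra $A\le D$ of the same class satisfies $\gamma_i(D)\cap A = \gamma_i(A)$. One inclusion is clear. For the other, $\gamma_i(D)\cap A \le Z_{c+1-i}(D)\cap A \le Z_{c+1-i}(A)$ holds only if centralizing is inherited, which is false in general. So instead I would exploit the explicit witnesses from the proof of Theorem \ref{thm:mainliealgebra}: an element of $G$ of $A$-level $j<i$ has a nonzero iterated bracket with $x$ (or $a_{\alpha_*}$) of total length $c$ inside $A\le D$. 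Such an element therefore lies outside $Z_{c+1-i}(D)\supseteq\gamma_i(D)$. Conversely, elements of $P_i$ lie in $\gamma_i(A)\le\gamma_i(D)$. This gives $\gamma_i(D)\cap G = P_i$, and symmetrically it equals $P'_i$, a contradiction.

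The main obstacle is exactly this last step: verifying that the nonvanishing witnesses in $A$ correctly detect the $P$-level of elements of $G$ (rather than of $\gamma$-levels), which requires rerunning the Theorem \ref{thm:mainliealgebra} computation with the general Lazard structure.
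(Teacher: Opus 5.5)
Your treatment of (1)$\Leftrightarrow$(2) and (2)$\Rightarrow$(3) is fine and matches the paper. The gap is in (3)$\Rightarrow$(2). The generalisation of Theorem \ref{thm:mainliealgebra} to an arbitrary Lazard structure on the factor is false. In fact $A=\langle x\rangle\otimes_0 G$, with $G$ carrying $(P_i)$, is UL only when $(P_i)$ is the lower central series of $G$. The reason is a grading argument:
\begin{itemize}
\item All Shirshov relations involve only letters from $G$, so $A$ is graded by degree in $x$.
\item Its degree-$0$ part is exactly $G$, since regular special words of length $\geq 2$ in letters of one factor do not exist.
\item $\gamma_i(A)$ is spanned by homogeneous brackets of generators, so it is a graded subspace.
\end{itemize}
Hence $\gamma_i(A)\cap G=\gamma_i(G)$, whatever Lazard series $G$ carries.

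So the sentence ``inside $A$ we have $\gamma_i(A)\cap G=P_i$'' is exactly what fails; what is true is $P_i(A)\cap G=P_i$. Likewise ``elements of $P_i$ lie in $\gamma_i(A)$'' is false for every element of $P_i\setminus\gamma_i(G)$. This is precisely where the proof of Theorem \ref{thm:mainliealgebra} used $P_i(A)=\gamma_i(A)$, in the base case of $P_i(B)\subseteq\gamma_i(B)$. Case 1 of the other inclusion also needs a level-$1$ element of the factor, which need not exist.

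Two distinct Lazard series cannot both be the lower central series, so one of your two extensions is never UL, and the contradiction disappears. Concretely, take $c=2$ and $G=\langle a\rangle$ one-dimensional with $P_2=G$. Your $A$ is the abelian algebra $\langle x,a\rangle$ and your $B$ is the Heisenberg algebra $\langle x,a,[x,a]\rangle$. Then $D=B\oplus\langle y\rangle$, sending the $x$ of $A$ to $y$, amalgamates them over $G$.

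The missing idea is to make an element $a\in P_k\setminus\gamma_k(G)$ a genuine $k$-fold commutator in some extension. A free product with $\langle x\rangle$ over $0$ never pushes elements of $G$ deeper into the lower central series. The paper does this directly, without UL extensions:
\begin{itemize}
\item Take $k$ maximal with $Z_{c+1-k}(G)\neq\gamma_k(G)$, pick $a\in Z_{c+1-k}(G)\setminus\gamma_k(G)$, and let $\ell<k$ be its lower-central level.
\item In the Lazard class, amalgamate $G$ with a free $c$-nilpotent $F$ over $\langle a\rangle$ in two ways. When $G$ carries the lower central series, send $a\mapsto[x_1,\dots,x_\ell]$. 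When $G$ carries the series of Proposition \ref{prop: UL = LC}, send $a\mapsto[x_1,\dots,x_k]$.
\item In a common group amalgam $D$ one would get $1\neq[y_1,\dots,y_c]=[d,y_{\ell+1},\dots,y_c]=[z_1,\dots,z_k,y_{\ell+1},\dots,y_c]\in\gamma_{c-\ell+k}(D)=1$, a contradiction.
\end{itemize}

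Your squeeze step is actually sound once the right extensions exist. Contrary to your worry, $Z_j(D)\cap A\le Z_j(A)$ holds for any $A\le D$ by an easy induction. So a UL algebra $A$ of class $c$ satisfies $\gamma_i(D)\cap A=\gamma_i(A)$ in every $D\supseteq A$ of class $\le c$. Your route could therefore be repaired in two steps. First build $G'\supseteq G$ with $\gamma_i(G')\cap G=P_i$, using the same free-amalgam trick over each Malcev basis element. Then apply Theorem \ref{thm:mainliealgebra} to $G'$ with its lower central series. That repair still relies on the paper's key trick and is longer than the paper's direct argument.
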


\begin{proof}
    The equivalence of (1) and (2) is Proposition \ref{prop: UL = LC}. (2) implies (3) since if $\alpha: G \to A$ and $\beta: G \to B$ are group embeddings then, after giving $A$ and $B$ arbitrary expansions with named Lazard series, the embeddings $\alpha$ and $\beta$ must be embeddings also in the expanded language. Then the existence of an amalgam follows from the amalgamation property in $\mathbb{G}_{c,p}^{P}$. 

    Finally, we show (3)$\implies$(2).  Suppose $G$ is not Lazard canonical and let $k$ be greatest such that $Z_{c+1-k}(G) \neq \gamma_{k}(G)$ and pick $a \in Z_{c+1-k}(G) \setminus \gamma_{k}(G)$. Let $\ell$ be greatest such that $a \in \gamma_{\ell}(G)$, so, necessarily, $\ell < k$. 
    
    Let $G_{1} \in \mathbb{G}^{P}_{c,p}$ denote the Lazard group with underlying group $G$ with predicates interpreted by the lower central series. Let $G_{2} \in \mathbb{G}^{P}_{c,p}$ denote the Lazard group with underlying group $G$ with predicates interpreted as in the proof of Proposition \ref{prop: UL = LC}, where $a \in P_{k}(G) \setminus P_{k+1}(G)$. Let $A_{i}$ denote the substructure of $G_{i}$ generated by $a$, so in both cases the underlying group is a cyclic group of order $p$ (or a $\Q$-powered cyclic group) but in $A_{1}$ $a$ is at level $\ell$ and, in $A_{2}$, $a$ is at level $k$. 
    
    Let $F$ denote the free $c$-nilpotent group of exponent $p$ (or $\Q$-powered) generated by $x_{1},\ldots, x_{c}$. View $F$ as a Lazard group by interpreting the predicates by the lower central series. Choose elements $b_{1},b_{2} \in F$ by $b_{1} = [x_{1},\ldots, x_{\ell}] \in \gamma_{\ell}(F)$ and $b_{2} = [x_{1},\ldots, x_{k}] \in \gamma_{k}(F)$. Let $B_{i}$ denote the substructure of $F$ generated by $b_{i}$ so, as Lazard groups, we have $A_{i} \cong B_{i}$ for $i = 1,2$. Let $C_{i}$ denote an amalgam of $G_{i}$ and $F$ over $A_{i}$ where the map $A_{i} \to G_{i}$ is the inclusion, and the map $A_{i} \to F$ is the unique map sending $a \mapsto b_{i}$, for $i = 1,2$.  Let $\beta_{i} : G_{i} \to C_{i}$ be the embeddings given by the amalgamation property. 

    Now we forget the Lazard structure and regard $\beta_{1}$ and $\beta_{2}$ as embeddings of groups $\beta_{1}: G \to C_{1}$ and $\beta_{2} : G \to C_{2}$. Suppose towards contradiction that there is an amalgam $D \in \mathbb{G}_{c,p}$ for these embeddings. Let $d$ be the image of $a$ in $D$ and let $F_{1}$ the copy of $F$ in the image of $C_{1}$ in $D$ with generators $y_{1},\ldots, y_{c}$ and let $F_{2}$ be the copy of $F$ in the image of $C_{2}$ in $D$ with generators $z_{1},\ldots, z_{c}$ so that $d = [y_{1},\ldots, y_{\ell}]$ and $d = [z_{1},\ldots, z_{k}]$. 

    Because $d = [y_{1},\ldots, y_{\ell}]$ and, by freeness of $F_{1}$, $[y_{1},\ldots, y_{c}] \neq 1$, we obtain $[d,y_{\ell+1},\ldots, y_{c}] \neq 1$. Because $d = [z_{1},\ldots, z_{k}]$, we obtain 
    $$
    [z_{1},\ldots, z_{k},y_{\ell+1},\ldots, y_{c}] \neq 1.
    $$
    But since $\ell < k$, $[z_{1},\ldots, z_{k}, y_{\ell+1},\ldots, y_{c}] \in \gamma_{c - \ell + k}(D) \leq \gamma_{c+1}(D) = 1$, a contradiction. This proves that there cannot be such amalgam $D$. 
\end{proof}

\subsection{Describing the generic groups}

Let $\mathbb{K}$ be a countable hereditary class of finitely generated $L$-structures satisfying the joint embedding property (JEP), and let $\mathbb{S}$ denote the set of $L$-structures $M$ whose underlying set is $\omega$ such that $\mathrm{Age}(M) \subseteq \mathbb{K}$ and $M$ is not finitely generated. We will always assume $\mathbb{K}$ is \emph{unbounded}, meaning that for each $A \in \mathbb{K}$, there exists some $B \in \mathbb{K}$ with $A \subsetneq B$. For each finitely generated $L$-structure $B$ with $\mathrm{dom}(B) \subseteq \omega$ and $B$ isomorphic to a structure in $\mathbb{K}$, we define a subset $\mathcal{O}_{B} \subseteq \mathbb{S}$ to be the set of $M \in \mathbb{S}$ such that $B$ is a substructure of $M$ (i.e. the restriction of $M$ to $\mathrm{dom}(B)$ is $B$). Taking the subsets of the form $\mathcal{O}_{B}$ to be a basis of open sets turns $\mathbb{S}$ into a topological space. We say $M \in \mathbb{S}$ is \emph{generic} if the set $\{N \in \mathbb{S} : N \cong M\}$ is comeager. If $\mathbb{K}$ is the class of finitely generated $X$s, we refer to such an $M$ as \emph{the generic} $X$. 

We say a countable structure $M$ is a \emph{weak Fra\"iss\'e limit }of the class $\mathbb{K}$ if $\mathrm{Age}(M) = \mathbb{K}$ and $M$ is \emph{weakly-}$\mathbb{K}$\emph{-homogeneous}, which means that for any finite generated $A \subseteq M$, there is a finitely generated $B$ with $A \subseteq B \subseteq M$ such that if $C \in \mathbb{K}$ and $f : B \to C$ is an embedding, there is some embedding $h: C \to M$ such that, naming the inclusions $i : A \to B$  and $j : B \to M$, we have $j \circ i = h \circ f \circ i$. If a weak Fra\"iss\'e limit exists for $\mathbb{K}$, then it is unique up to isomorphism \cite[Theorem 2.5]{kabluchko2022weakfraisselimits}. The existence of a weak Fra\"iss\'e limit is equivalent to $\mathbb{K}$ satisfying the joint embedding property (JEP) and the weak amalgamation property (WAP, see Definition \ref{def:APs}(3)).  See \cite[Chapter 4]{kruckmanthesis2016} for an excellent exposition of the general theory. 

Finally, if $P$ is a property of $L$-structures, we define the game $G_{\mathbb{K}}(P)$ to be the game in which Players I and II alternate playing an increasing sequence of structures in $\mathbb{K}$.
$$
\begin{array}{c|ccccc}
\mathrm{I} 
  & A_{0} 
  & 
  & A_{2}
  & 
  & \cdots \\ \hline
\mathrm{II} 
  & 
  & A_{1}
  & 
  & A_{3}
  & \cdots
\end{array}
$$
Player II wins the game if $M := \bigcup_{i} A_{i}$ has the property $P$. A useful fact is that if $P_{i}$ is a property for each $i < \omega$, then if Player II has a winning strategy in $G_{\mathbb{K}}(P_{i})$ for each $i$, then, by `interleaving' strategies, Player II also has a winning strategy for $G_{\mathbb{K}}\left( \bigwedge_{i} P_{i} \right)$ \cite[Remark 4.1.15]{kruckmanthesis2016}. 

\begin{fact}
    Let $\mathbb{K}$ be a countable unbounded hereditary class of finitely generated $L$-structures and let $\mathbb{S}$ be the set of $L$-structures $M$ with underlying set $\omega$ such that $\mathrm{Age}(M) \subseteq \mathbb{K}$ and $M$ is not finitely generated. The following are equivalent for a structure $M \in \mathbb{S}$:
    \begin{enumerate}
        \item $M$ is generic. 
        \item $M$ is a weak Fra\"iss\'e limit of $\mathbb{K}$. \cite[Theorem 2.5]{kabluchko2022weakfraisselimits}
        \item If $\mathrm{Iso}_{M}$ denotes the property of being isomorphic to $M$, then Player II has a winning strategy in the game $G_{\mathbb{K}}(\mathrm{Iso}_{M})$. \cite[Corollary 7.1]{krawczyk2021games}
    \end{enumerate}
    Moreover, the existence of an $M \in \mathbb{S}$ satisfying the equivalent conditions (1)-(3) is equivalent to $\mathbb{K}$ satisfying JEP and WAP. 
\end{fact}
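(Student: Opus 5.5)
Since this statement is a compilation of known results, the plan is to organize the argument around the Banach--Mazur game on $\mathbb{S}$ rather than reprove everything from scratch.

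\emph{Step 1: (1)$\Leftrightarrow$(3).} The basic open sets $\mathcal{O}_{B}$ correspond to structures $B\in\mathbb{K}$ with $\mathrm{dom}(B)\subseteq\omega$, and $\mathcal{O}_{B'}\subseteq\mathcal{O}_{B}$ exactly when $B\subseteq B'$. By the Oxtoby characterization, a set $X\subseteq\mathbb{S}$ is comeager iff Player II wins the Banach--Mazur game in which the players alternately shrink basic open sets, with II winning if the intersection lies in $X$. Through the correspondence above, this game is the game $G_{\mathbb{K}}(\cdot)$.
\begin{itemize}
\item Unboundedness of $\mathbb{K}$ lets II ensure that the union $\bigcup_i A_i$ has domain all of $\omega$ and is not finitely generated, so it really is a point of $\mathbb{S}$.
\item $\mathrm{Iso}_M$ is invariant under relabelling domains, so we may ignore the requirement $\mathrm{dom}\subseteq\omega$ when passing between the two games.
\end{itemize}
Applying this with $X=\{N\in\mathbb{S}: N\cong M\}$ gives (1)$\Leftrightarrow$(3).

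\emph{Step 2: (2)$\Rightarrow$(3).} Player II plays by bookkeeping. At each of her turns she maintains an embedding $e_n : A_{2n}\to M$, arranged so that $e_n(A_{2n})$ is the weak-homogeneity witness $B$ for some finitely generated $A\subseteq M$ she is currently handling.
\begin{itemize}
\item When Player I extends to $A_{2n+1}$, she applies weak $\mathbb{K}$-homogeneity to $f=e_n^{-1}$ followed by inclusion. This yields an embedding $h : A_{2n+1}\to M$ agreeing with $e_n$ on the image of $A$.
\item She then enlarges $h(A_{2n+1})$ to include the next element of an enumeration of $M$ and to contain the witness for the new image, and she plays the preimage of this enlargement.
\item The maps agree on the successive small pieces $A$ and exhaust $M$, so they cohere into an isomorphism $\bigcup_i A_i\cong M$.
\end{itemize}
The delicate point is that consistency is only guaranteed on $A$, not on $B$. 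So at each stage one must keep track of a nested sequence $A^{(0)}\subseteq B^{(0)}\subseteq A^{(1)}\subseteq\cdots$.

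\emph{Step 3: (3)$\Rightarrow$(2).} This is where I expect the main obstacle: it requires extracting weak homogeneity from a winning strategy $\sigma$. The approach is to argue by contradiction.
\begin{itemize}
\item Suppose some finitely generated $A\subseteq M$ has no witness $B$. Let Player I open with a copy of $A$ and follow $\sigma$.
\item Whenever $\sigma$ answers with some $A_{2n+1}$, which is isomorphic to a finitely generated substructure of $M$ containing $A$ and hence not a witness, Player I plays an extension $C$ refuting it.
\item The outcome is isomorphic to $M$. Yet the resulting copy of $A$ admits no compatible embedding of the refuting $C$ over $A$, which contradicts the isomorphism.
\end{itemize}
Making this rigorous needs care: the refutation must be realized along the fixed isomorphism, which is unknown in advance. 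I would handle this by diagonalizing over the countably many possible embeddings of each $A_{2n+1}$ into $M$, refuting one of them at each round and interleaving the refutations as in the remark on interleaving strategies. If this becomes awkward, I would simply invoke \cite[Theorem 2.5]{kabluchko2022weakfraisselimits} for this direction.

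\emph{Step 4: the moreover clause.}
\begin{itemize}
\item \emph{Necessity.} Given $M$ as in (1)--(3), JEP follows from $\mathrm{Age}(M)=\mathbb{K}$. WAP follows from weak homogeneity: for $C\subseteq M$, take its witness $C'$. Any two extensions of $C'$ both embed into $M$ compatibly over $C$, and the substructure of $M$ generated by the two images is an amalgam over $C$.
\item \emph{Sufficiency.} Conversely, from JEP and WAP, build a chain in $\mathbb{K}$ by countable bookkeeping. At each stage, realize each extension problem over a WAP-witness and jointly embed every structure of $\mathbb{K}$ (JEP); this is possible since $\mathbb{K}$ is countable. Unboundedness ensures the union is not finitely generated. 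The union is then a weak Fra\"iss\'e limit.
\end{itemize}
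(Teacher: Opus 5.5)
The paper does not prove this Fact. It quotes it from \cite{kabluchko2022weakfraisselimits} and \cite{krawczyk2021games}, which is also your fallback for Step 3, so your proposal is a self-contained reconstruction. Steps 1, 2 and the necessity half of Step 4 are fine. Step 3, however, has a genuine gap.

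Your contradiction needs the final isomorphism $\phi\colon\bigcup_i A_i\to M$ to send the opening copy $A_0$ onto $A$ by the given identification, or by a map in its $\mathrm{Aut}(M)$-orbit. Nothing forces this. The restriction $\phi|_{A_0}$ can be any of the countably many embeddings $g\colon A\to M$. The hypothesis that the inclusion $A\subseteq M$ has no witness says nothing about an embedding $g$ for which $g(A)$ does have a witness in $M$, and such a $g$ need not be refutable. Since $\sigma$ is winning, some $g$ always survives, and nothing tells you it is a bad one. So no scheduling of ``refute one embedding per round'' produces a contradiction at $A$.

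Your refutation computation itself is correct; it just has to be aimed at every stage, and the conclusion drawn differently.
\begin{itemize}
\item Call $e\colon A_{2k+1}\to M$ bad if $e(A_{2k+1})$ has no witness in $M$.
\item At a turn $n\ge k$, either no embedding of $A_{2n+1}$ extends $e$, or some $e'$ does. In the latter case, since $e'(A_{2n+1})$ is not a witness, there is $f\colon e'(A_{2n+1})\to C$ such that no $h\colon C\to M$ has $h\circ f$ equal to the identity on $e(A_{2k+1})$.
\item Playing a copy of $C$ over $A_{2n+1}$ along $f\circ e'$ kills every isomorphism $\phi$ with $\phi|_{A_{2k+1}}=e$, because such a $\phi$ would supply exactly such an $h$.
\item There are only countably many bad embeddings of stages, so Player I kills them all by bookkeeping. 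Against $\sigma$ the outcome is still isomorphic to $M$, now via a $\phi$ carrying every stage onto a witnessed substructure.
\item A witness for $D$ is a witness for every $A\subseteq D$, and the sets $\phi(A_{2k+1})$ exhaust $M$. Hence every finitely generated $A\subseteq M$ has a witness, with no opening move at $A$ and no contradiction needed.
\item You also did not address $\mathrm{Age}(M)=\mathbb{K}$; it follows by letting Player I open with an arbitrary member of $\mathbb{K}$.
\end{itemize}

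Alternatively, you can go through the moreover clause.
\begin{itemize}
\item If WAP failed at some $A\in\mathbb{K}$, Player I opens with $A$. At each turn he branches between two extensions of II's last move that cannot be amalgamated over $A$.
\item The $2^{\aleph_0}$ resulting plays give isomorphisms to $M$ whose restrictions to $A$ must be pairwise distinct: two equal restrictions would make the substructure of $M$ generated by the images an amalgam. But there are only countably many embeddings $A\to M$, so WAP holds.
\item Then build the weak Fra\"iss\'e limit $F$, apply (2)$\Rightarrow$(3) to $F$, and interleave the two winning strategies to get $M\cong F$.
\end{itemize}

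In that construction (your sufficiency argument), take the WAP-witness of the whole current stage $D_n$ and set $D_{n+1}:=D_n'$. A WAP-witness for a smaller $A\subseteq D_n$ could only be inserted into the chain over $A$ by an amalgamation you do not have.
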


Recall that $\mathbb{G}_{c,p}$ is the Fra\"iss\'e limit of the class $\mathbb{K}^{P}_{c,p}$. Write $\mathbb{G}_{c,\mathbb{Q}}^{P}$ for the Fra\"iss\'e limit of the class $\mathbb{K}^{P}_{c,\mathbb{Q}}$. We will use the fact that both $\mathbb{K}^{P}_{c,p}$ and $\mathbb{K}^{P}_{c,\mathbb{Q}}$ come with a notion of free amalgam: we will denote the free amalgam of $A$ and $B$ over $C$ by $A \otimes_{C} B$. 

\begin{thm}
\begin{enumerate}
    \item The reduct of $\mathbb{G}_{c,p}$ to the language of groups is the generic $c$-nilpotent group of exponent $p$. 
    \item The reduct of $\mathbb{G}_{c,\mathbb{Q}}$ to the language of $\mathbb{Q}$-powered groups is the generic $\mathbb{Q}$-powered $c$-nilpotent group.
    \item The reduct of $\mathbb{G}_{c,\mathbb{Q}}$ to the language of groups is the generic torsion-free $c$-nilpotent group. 
\end{enumerate}
\end{thm}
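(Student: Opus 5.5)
The plan is to verify condition (2) of the last Fact: each reduct is a weak Fra\"iss\'e limit of the corresponding class, and is therefore generic. In fact we aim for something stronger, namely that every finitely generated substructure of the reduct sits inside a finitely generated \emph{amalgamation base} over which the reduct is homogeneous. The key input is Theorem \ref{thm:mainliealgebra} together with the fact that UL-groups are Lazard canonical (Proposition \ref{prop: UL = LC}), transported through the Lazard or Malcev correspondence.

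Write $M$ for the reduct of $\mathbb{G}_{c,p}$ to the group language.
\begin{enumerate}
\item \emph{Age.} We first show $\mathrm{Age}(M)=\mathbb{K}_{c,p}$. The inclusion $\subseteq$ is clear. For $\supseteq$, any $G\in\mathbb{K}_{c,p}$, expanded by naming its lower central series, lies in $\mathbb{K}^P_{c,p}$, so it embeds into the Fra\"iss\'e limit.
\item \emph{Cofinal UL substructures of $M$.} This is the main step. Let $A\subseteq M$ be finitely generated. Then $A$ carries the Lazard series induced from $M$, which need not be its lower central series, and this is the obstacle. Theorem \ref{thm:mainliealgebra} was proved for $A$ with $P_i=\gamma_i$, and that hypothesis was used only in the base case of the inclusion $P_i(B)\subseteq\gamma_i(B)$.
\item \emph{Repairing the levels.} To handle an arbitrary induced Lazard structure, we first enlarge $A$ (working on the Lie algebra side). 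Take a Malcev basis $a_1,\dots,a_n$ of $A$. For each $a_j$ of level $k_j$, freely amalgamate, over the $1$-dimensional sub-LLA $\langle a_j\rangle$, a copy of the free $c$-nilpotent Lie algebra on $y^{j}_1,\dots,y^{j}_{k_j}$. This free algebra is graded by its lower central series, and $a_j$ is identified with $[y^j_1,\dots,y^j_{k_j}]$; both are $1$-dimensional of level $k_j$, so this is a legitimate amalgamation problem in the LLA class. Iterating finitely many times gives a finite-dimensional LLA $C\supseteq A$ generated by level-$1$ elements. For an LLA generated by level-$1$ elements, the bracket-induction from the proof of the Lazard-series lemma on Shirshov's basis shows $P_i(C)=\gamma_i(C)$.
\item \emph{Applying the theorem.} Now Theorem \ref{thm:mainliealgebra} applies verbatim to $C$ and gives a finite UL algebra $B=\langle x\rangle\otimes_0 C$. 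Its Lazard structure, being the free-amalgam structure, is the unique one, and $A\subseteq B$ as Lazard substructures. By the extension property of the Fra\"iss\'e limit, $B$ can be realised inside $\mathbb{G}_{c,p}$ over $A$.
\item \emph{Weak homogeneity.} Let $f:B\to D$ be a group embedding with $D\in\mathbb{K}_{c,p}$. Give $D$ any Lazard series, for instance its lower central series. Its restriction along $f$ is a Lazard series on $B$, hence equals the unique one, so $f$ is an embedding in $\mathbb{K}^P_{c,p}$. Homogeneity of $\mathbb{G}_{c,p}$ then provides $h:D\to M$ with $h\circ f=\mathrm{id}_B$. This is exactly weak $\mathbb{K}_{c,p}$-homogeneity with $A\subseteq B$, which proves (1).
\end{enumerate}

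Part (2) is the same argument verbatim via the Malcev correspondence over $\mathbb{Q}$, using that $\mathbb{K}^P_{c,\mathbb{Q}}$ is Fra\"iss\'e and that substructures are $\mathbb{Q}$-powered subgroups.

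For (3), let $N$ be the group reduct of $\mathbb{G}_{c,\mathbb{Q}}$. Every finitely generated subgroup of $N$ is torsion-free and $c$-nilpotent. Conversely, any $G\in\mathbb{K}_{c,\mathrm{tf}}$ embeds in $\hat G$ (Fact \ref{fact: malcev completion}(3)), which embeds in $N$ by (2); hence $\mathrm{Age}(N)=\mathbb{K}_{c,\mathrm{tf}}$. Given a finitely generated $A\le N$, the subgroup $\sqrt A$ is finitely generated as a $\mathbb{Q}$-powered group. Choose $\hat B\supseteq\sqrt A$ inside $N$ as in (2), and let $B$ be the abstract subgroup generated by finitely many $\mathbb{Q}$-powered generators of $\hat B$ together with generators of $A$, exactly as in the proof of the cofinal amalgamation Proposition; then $\sqrt B=\hat B$. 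Given an embedding $f:B\to D$ with $D\in\mathbb{K}_{c,\mathrm{tf}}$, it lifts to $\hat f:\hat B\to\hat D$ by Fact \ref{fact: malcev completion}(3). Applying (2), we get $h:\hat D\to N$ over $\hat B$, and restricting to $D$ gives weak homogeneity.

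The step I expect to be delicate is the level-repair step (3 above): checking that the iterated free amalgams keep $A$ as a Lazard substructure, and that generation by level-$1$ elements forces $P_i=\gamma_i$. If that fails, a fallback is to run a Player II strategy in $G_{\mathbb{K}}(\mathrm{Iso}_M)$. Player II repeatedly answers with UL extensions produced by Theorem \ref{thm:mainliealgebra} applied to the current group equipped with its lower central series, and interleaves these moves with the extension steps of the Fra\"iss\'e limit.
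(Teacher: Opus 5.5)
Your argument is correct in outline and takes a different route from the paper. However, the reason you give at the step you flagged as delicate is false as stated.

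An LLA generated by level-$1$ elements need not have $P_i=\gamma_i$. For $c\ge 2$, take $L=\mathbb{F}u\oplus\mathbb{F}v$ abelian with $P_2=\dots=P_c=\mathbb{F}(u+v)$. Then $u,v$ have level $1$ and generate $L$, yet $P_2(L)\neq 0=\gamma_2(L)$.

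What you need is specific to your $C$, and it follows from freeness:
\begin{enumerate}
\item Since $a_1,\dots,a_n$ is a Malcev basis, Lemma \ref{lem:malcev}(3) gives $P_i(A)=\Span\{a_j : k_j\ge i\}$.
\item Since $a_j=[y^j_1,\dots,y^j_{k_j}]\in\gamma_{k_j}(C)$, this gives $P_i(A)\subseteq\gamma_i(C)$. Also $P_i(F_j)=\gamma_i(F_j)\subseteq\gamma_i(C)$.
\item So the inclusions of $A$ and of each $F_j$ are LLA homomorphisms into $(C,\gamma_\bullet(C))$.
\item Apply clause (4) of Definition \ref{def:freeamalgambaudisch} at each stage of the iterated amalgam, using clause (2) to identify the induced map with the inclusion. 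This shows that the identity $(C,P_\bullet(C))\to(C,\gamma_\bullet(C))$ is an LLA homomorphism, i.e.\ $P_i(C)\subseteq\gamma_i(C)$.
\end{enumerate}
With this fix, steps 4--5 go through, as does your treatment of (2) and (3); you also need $C\neq 0$ when $A$ is trivial. Your fallback would not avoid this step. Making the union of UL moves rich for $\mathbb{K}^P_{c,p}$ requires realising Lazard extensions $X\subseteq Y$, with an arbitrary series on $Y$, inside UL groups, which is the same lemma.

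As for the comparison, the paper never places $B$ inside $\mathbb{G}_{c,p}$.
\begin{itemize}
\item It fixes an abstract UL group $B\supseteq A$, obtained via Theorem \ref{thm:mainliealgebra} with $A$ carrying its lower central series.
\item It then shows that Player II wins $G_{\mathbb{K}_{c,p}}(\mathrm{WH}_{A,B,C})$ by answering with free amalgams $A_k\otimes_B C$. This is legitimate because the Lazard series of $B$ is canonical.
\item Genericity then comes from the game characterization.
\end{itemize}
That route needs only Theorem \ref{thm:mainliealgebra} as stated. But as written it controls a comeager set of enumerated groups and does not itself exhibit a suitable $B$ inside the specific structure $\mathbf{G}$.

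Your route checks the definition of weak Fra\"iss\'e limit directly on the reduct, via the extension property of $\mathbb{G}_{c,p}$. The price is the stronger lemma that every finite LLA, with an arbitrary Lazard series, embeds as an LLA into a finite UL one. The gain is that the identification of the generic group with the reduct becomes explicit. That lemma is exactly what produces a UL subgroup of $\mathbf{G}$ over a finitely generated $A\le\mathbf{G}$ whose induced series is not $\gamma_\bullet(A)$.
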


\begin{proof}
(1)  Let $\mathbf{G}$ be the underlying group of $\mathbb{G}_{c,p}$. Clearly any finitely generated subgroup of $\mathbf{G}$ is a group of exponent $p$ and nilpotency class $\leq c$. Conversely, any finitely generated group of exponent $p$ and nilpotency class $\leq c$ has an expansion to a structure in $\mathbb{K}_{c,p}^{P}$ by naming the lower central series, and therefore is isomorphic to a finitely generated substructure of $\mathbb{G}_{c,p}$. Therefore, $\mathrm{Age}(\mathbf{G}) = \mathbb{K}_{c,p}$. 

Next we show $\mathbf{G}$ is weak-$\mathbb{K}_{c,p}$-homogeneous. Fix $A \in \mathbb{K}_{c,p}$ and let $B \in \mathbb{K}_{c,p}$ be a UL-group with $A \subseteq B$.  Suppose $C \in \mathbb{K}_{c,p}$ and $j : B \to C$ is an embedding. Following \cite[Theorem 4.2.2]{kruckmanthesis2016}, let $\mathrm{WH}_{A,B,C}$ denote
the property of a group with underlying set $\omega$ with exponent $p$ and nilpotency class $\leq c$ that, if $A \subseteq B \subseteq M$, then there is an embedding $f : C \to M$ such that $f  \circ j  \circ i$ is the inclusion of $A$ in $M$.  We prove that Player II has a winning strategy in $G_{\mathbb{K}_{c,p}}(\mathrm{WH}_{A,B,C})$. Assuming Player $I$ has played $A_{k}$ (so $k$ is even), if $B \not\subseteq A_{k}$, then Player II plays arbitrarily. If $B \subseteq A_{k}$, then Player II plays $A_{k+1} := A_{k} \otimes_{B} C$, the free amalgam of $A_{k}$ and $C$ over $B$, where $A_{k}$, $B$, and $C$ are viewed as elements of $\mathbb{K}^{P}_{c,p}$ by interpreting the predicates as arbitrary Lazard series (which in the case of $B$ is canonical and preserved by the embeddings). It is clear that if $(A_{i})_{i < \omega}$ is a run of the game played according to this strategy and $M = \bigcup A_i$, then if $A \subseteq B \subseteq M$, then there is an embedding $f : C \to M$ such that $f  \circ j  \circ i$ is the inclusion of $A$ in $M$, since if $k$ is the least even number such that $B \subseteq A_{k}$, then we may map $C$ to its image in $A_{k+1} = A_{k} \otimes_{B} C$. 

Moreover, it is clearly generic that $A \subseteq B \subseteq M$, since if Player II wants to ensure that $A \subseteq B \subseteq M$, after Player I begins the game by playing $A_{0}$, Player II can respond, by JEP, by playing any structure $A_{1} \supseteq A_{0}$ into which $B$ embeds. This shows that $\mathbf{G}$ is $\mathbb{K}_{p,c}$-universal and weak-$\mathbb{K}_{p,c}$-homogeneous and therefore is the generic $c$-nilpotent group of exponent $p$. 

(2) The proof of (2) is identical to the proof of (1): we let $\mathbf{G}$ be the reduct of $\mathbb{G}_{c,\mathbb{Q}}$ to the underlying $\mathbb{Q}$-powered group, and then use the same arguments to show that $\mathrm{Age}(\mathbf{G}) = \mathbb{K}_{c,\mathbb{Q}}$ and that $\mathbf{G}$ is weak-$\mathbb{K}_{c,\mathbb{Q}}$-homogeneous. 

(3) Now let $\mathbf{G}$ denote the reduct of $\mathbb{G}_{c,\mathbb{Q}}$ to the language of groups (so the reduct of the $\mathbb{Q}$-powered group considered in (2), forgetting the $n$th root functions). First, we show $\mathrm{Age}(\mathbf{G}) = \mathbb{K}_{c,\mathrm{tf}}$. Fix any $A \in \mathbb{K}_{c,\mathrm{tf}}$ and let $\hat{A}$ be its Malcev completion. The group $\hat{A}$ has a unique expansion to a $\mathbb{Q}$-powered group which is an element of $\mathbb{K}_{c,\mathbb{Q}}$, hence embeds into $\mathbb{G}_{c,\mathbb{Q}}$ by (2). This shows $A \in \mathrm{Age}(\mathbf{G})$. Clearly each finitely generated subgroup of $\mathbf{G}$ is at most $c$-nilpotent and torsion free, hence $\mathrm{Age}(\mathbf{G}) = \mathbb{K}_{c,\mathrm{tf}}$. 

Next, we show that $\mathbf{G}$ is weak-$\mathbb{K}_{c,tf}$-homogeneous. Pick any $A \in \mathbb{K}_{c,\mathrm{tf}}$. Choose any $B \in \mathbb{K}_{c,tf}$ with $A \subseteq B$ such that $\hat{B}$ is a UL-group (take, for example, $B$ to be generated by a finite generating set of $A$, as a group, together with a finite set of generators for $B$ as a $\mathbb{Q}$-powered group). Suppose $C \in \mathbb{K}_{c,\mathrm{tf}}$ and $j : B \to C$ is an embedding. Now, as in (1), let $\mathrm{WH}_{A,B,C}$ denote
the property of a torsion-free group with underlying set $\omega$ and nilpotency class $\leq c$ that, if $A \subseteq B \subseteq M$, then there is an embedding $f : C \to M$ such that $f  \circ j  \circ i$ is the inclusion of $A$ in $M$. Now we adapt the strategy from (1) to a winning strategy for Player II in the game $G_{\mathbb{K}_{c,\mathrm{tf}}}(\mathrm{WH}_{A,B,C})$.   Assuming Player $I$ has played $A_{k}$ (so $k$ is even), if $B \not\subseteq A_{k}$, then Player II plays arbitrarily. If $B \subseteq A_{k}$, then Player II plays any $A_{k+1}$ containing $A_{k}$ and containing a finite generating set for the free amalgam $\hat{A_{k}} \otimes_{\hat{B}} \hat{C}$, the free amalgam of the Malcev completions of $A_{k}$ and of $C$ over the Malcev completion of $B$, viewed as $\mathbb{Q}$-powered groups in $\mathbb{K}^{P}_{c,\mathbb{Q}}$, where the predicates are interpreted by arbitrary Lazard series (which, in the case of $\hat{B}$ is canonical). As above, it is clear that if $(A_{i})_{i < \omega}$ is a run of the game played according to this strategy and $M = \bigcup A_i$, then if $A \subseteq B \subseteq M$, then there is an embedding $f : C \to M$ such that $f  \circ j  \circ i$ is the inclusion of $A$ in $M$, since if $k$ is the least even number such that $B \subseteq A_{k}$, then we may map $C$ to its image in $A_{k+1} = A_{k} \otimes_{B} C$, and it is generic that $A \subseteq B \subseteq M$, which shows that $\mathbf{G}$ is $\mathbb{K}_{c,\mathbb{Q}}$-universal and weak-$\mathbb{K}_{c,\mathbb{Q}}$-homogeneous and therefore is the generic $c$-nilpotent torsion-free group.

\end{proof}

\begin{cor}
    The generic torsion-free $c$-nilpotent group is $\mathbb{Q}$-powered. 
\end{cor}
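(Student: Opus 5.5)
The plan is to read the corollary directly off part (3) of the preceding theorem. That result identifies the generic torsion-free $c$-nilpotent group, which is unique up to isomorphism, with the group-language reduct $\mathbf{G}$ of $\mathbb{G}_{c,\mathbb{Q}}$. Since being $\mathbb{Q}$-powered is an isomorphism-invariant property of abstract groups, it suffices to check that $\mathbf{G}$ is $\mathbb{Q}$-powered as an abstract group.

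The check is short. By construction, $\mathbb{G}_{c,\mathbb{Q}}$ is the Fra\"iss\'e limit of $\mathbb{K}^{P}_{c,\mathbb{Q}}$, whose members are $\mathbb{Q}$-powered groups in the language carrying the root functions $\sqrt[n]{-}$. Being $\mathbb{Q}$-powered amounts to two things: torsion-freeness, and unique solvability of $x^n=g$ for every $n \geq 2$. Both are expressed by universal (or $\forall\exists$, witnessed by the root function) conditions that hold in every finitely generated substructure. The limit is the union of a chain of members of the class, so it is again a $\mathbb{Q}$-powered group.

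First, for each $g$ the element $\sqrt[n]{g}$ lies in the finitely generated substructure generated by $g$, and there it satisfies $(\sqrt[n]{g})^n=g$. Second, uniqueness of roots and torsion-freeness concern finitely many elements at a time. So they can be checked inside the finitely generated substructure containing those elements, which is $\mathbb{Q}$-powered. Forgetting the root functions therefore leaves a group in which $n$-th roots exist and are unique, i.e.\ a $\mathbb{Q}$-powered group.

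There is no real obstacle here. The one point that needs care is that uniqueness of roots is a property of the ambient group, not just of the named function. I would handle this by noting that any two $n$-th roots $h,h'$ of $g$ lie in a common finitely generated substructure, namely the one generated by $h$ and $h'$, where uniqueness holds. Combined with the uniqueness of the generic up to isomorphism, this gives the corollary.
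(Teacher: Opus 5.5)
Your proposal is correct and follows the paper's (implicit) argument: the corollary is read off from part (3) of the preceding theorem, because the group reduct of $\mathbb{G}_{c,\mathbb{Q}}$ is $\mathbb{Q}$-powered. Your verification of this last fact, by checking torsion-freeness and the existence and uniqueness of roots inside finitely generated substructures of the limit, makes explicit a routine step the paper leaves unstated.
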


\bibliographystyle{plain}
\bibliography{biblio.bib}{}

\end{document}

%% file: arxiv_preamble.tex
\usepackage{enumerate}
\usepackage{url}
\usepackage[font=small,labelfont=bf]{caption}
\usepackage[all,arc]{xy}
\usepackage{tabularx}
\usepackage{adjustbox}

\usepackage[normalem]{ulem}

\usepackage{amssymb,latexsym}
\usepackage{amsmath,amsthm}
\usepackage{amsfonts,mathrsfs}
\usepackage{mathtools}

\usepackage{tikz}
\usetikzlibrary{shapes.geometric, arrows, positioning,decorations.pathreplacing,calligraphy}

\tikzstyle{bloc} = [rectangle, rounded corners, 
minimum width=3cm, 
minimum height=1cm,
text centered, 
draw=black, 
fill=airforceblue!30]

\tikzstyle{decision} = [diamond,
minimum width=3cm, 
minimum height=1cm, 
text centered, 
draw=black, 
fill=airforceblue!30]
\tikzstyle{arrow} = [thick,->,>=stealth]

\tikzstyle{io} = [trapezium, 
trapezium stretches=true, 
trapezium left angle=70, 
trapezium right angle=110, 
minimum width=3cm, 
minimum height=1cm, text centered, 
draw=black, fill=blue!30]

\tikzstyle{process} = [rectangle, 
minimum width=3cm, 
minimum height=1cm, 
text centered, 
text width=3cm, 
draw=black, 
fill=orange!30]

\usepackage{tikz-cd}
\usepackage{todonotes}

\usepackage[all]{xy}
\usepackage{graphicx}

\usepackage{hyperref}
\hypersetup{
    colorlinks,
    citecolor=blue,
    filecolor=blue,
    linkcolor=blue,
    urlcolor=blue
}

\usepackage{enumitem}

\usepackage{listings}
\usepackage{color}

\definecolor{dkgreen}{rgb}{0,0.6,0}
\definecolor{gray}{rgb}{0.5,0.5,0.5}
\definecolor{mauve}{rgb}{0.58,0,0.82}

\makeatletter
\newcommand{\ast@scaled}[2]{\raisebox{#2}{\scalebox{#1}{$\ast$}}}
\newcommand{\bigast}{%
  \mathop{%
    \mathchoice
      {\ast@scaled{1.7}{-0.40ex}}
      {\ast@scaled{1.2}{-0.13ex}}
      {\ast@scaled{1.0}{0ex}}
      {\ast@scaled{0.8}{0.06ex}}
  }}
\makeatother

\newtheorem{thm}{Theorem}[section]
\newtheorem{cor}[thm]{Corollary}
\newtheorem{prop}[thm]{Proposition}

\newtheorem{lem}[thm]{Lemma}

\newtheorem{theorem}[thm]{Theorem}

\newtheorem*{theorem*}{Theorem}

\theoremstyle{definition}
\newtheorem{defn}[thm]{Definition}

\newtheorem{fact}[thm]{Fact}

\theoremstyle{definition}
\newtheorem{definition}[thm]{Definition}

\newtheorem{question}[thm]{Question}

\theoremstyle{remark}

\makeatletter
\let\c@equation\c@thm
\makeatother
\numberwithin{equation}{section}

\newcommand\Q{\mathbb{Q}}
\newcommand\N{\mathbb{N}}

\newcommand\F{\mathbb{F}}

\newcommand\CC{\mathscr{C}}

\newcommand{\lev}{\mathrm{lev}}

\def\seq{\subseteq}

\newcommand{\vect}[1]{\langle {#1} \rangle}

\DeclareMathOperator{\Span}{span}

\definecolor{airforceblue}{rgb}{0.36, 0.54, 0.66}

\def\Ind{\setbox0=\hbox{$x$}\kern\wd0\hbox to 0pt{\hss$\mid$\hss}
\lower.9\ht0\hbox to 0pt{\hss$\smile$\hss}\kern\wd0}
\def\Notind{\setbox0=\hbox{$x$}\kern\wd0\hbox to 0pt{\mathchardef
\nn=12854\hss$\nn$\kern1.4\wd0\hss}\hbox to
0pt{\hss$\mid$\hss}\lower.9\ht0 \hbox to 0pt{\hss$\smile$\hss}\kern\wd0}

\def\indi#1{\mathop{\ \ \hbox to 0ex{\hss$\vert^{\hbox to 0ex{$\scriptstyle#1$\hss}}$\hss}
\lower1ex\hbox to 0ex{\hss$\smile$\hss}\ \ }}

\def\nindi#1{\mathop{\ \ \hbox to 0ex{\hss$\!\not{\vert}^{\hbox to 0ex{$\scriptstyle\,#1$\hss}}$\hss}
\lower1ex\hbox to 0ex{\hss$\smile$\hss}\ \ }}

